\newcommand{\U}{\mathcal U}
\newcommand{\V}{\mathcal V}
\newcommand{\C}{\mathcal C}
\newcommand{\vC}{\vec{\mathcal C}}
\newcommand{\w}{\omega}
\newcommand{\IR}{\mathbb R}
\newcommand{\IN}{\mathbb N}
\newcommand{\e}{\varepsilon}
\newcommand{\supp}{\mathrm{supp}}
\newtheorem{theorem}{Theorem}
\newtheorem{example}{Example}
\newtheorem{lemma}{Lemma}
\newtheorem{claim}{Claim}
\theoremstyle{definition}
\newtheorem{remark}{Remark}
\title[The strong universality of ANRs with a suitable algebraic structure]{The strong universality of ANRs with\\ a suitable algebraic structure}
\author{Taras Banakh}
\address{Ivan Franko National University of Lviv (Ukraine) and Jan Kochanowski University in Kielce (Poland)}
\email{t.o.banakh@gmail.com}
\subjclass{57N20; 22A26}
\keywords{Strongly universal space, absorbing space, Lawson semilattice}
\begin{document}
\begin{abstract} Let $M$ be an ANR space and $X$ be a homotopy dense subspace in $M$. Assume that $M$ admits a continuous binary operation $*:M\times M\to M$ such that for every $x,y\in M$ the inclusion $x*y\in X$ holds if and only if $x,y\in X$. Assume also that there exist continuous unary operations $u,v:M\to M$ such that $x=u(x)*v(x)$ for all $x\in M$. Given a $2^\w$-stable $\mathbf \Pi^0_2$-hereditary weakly $\mathbf \Sigma^0_2$-additive class of spaces $\C$, we prove that the pair $(M,X)$ is strongly $(\mathbf \Pi^0_1\cap\C,\C)$-universal if and only if for any compact space $K\in\C$, subspace $C\in\C$ of $K$ and nonempty open set $U\subseteq M$ there exists a continuous map $f:K\to U$ such that $f^{-1}[X]=C$. This characterization is applied to detecting strongly universal Lawson semilattices.
\end{abstract}
\maketitle

\section{Introduction and Main Results}

The strong universality is one of ingredients in characterizations of infinite-dimensional manifolds. It can be defined for spaces, pairs and more generally, systems of spaces \cite{BGM}. All topological spaces in this paper are metrizable and separable, and all maps are continuous.

We start with recalling the definition of strong universality for spaces and then will turn to pairs of spaces.

Let $Y$ be a topological space and $\U$ be an open cover of $Y$. Two maps $f,g:X\to Y$ on a topological space $X$ are called {\em $\U$-near} if for any $x\in X$ there exists a set $U\in\U$ such that $\{f(x),g(x)\}\subseteq U$. 

A subset $A$ of a topological space $X$ is called {\em a $Z$-set} in $X$ if $A$ is closed in $X$ and for every open cover $\U$ of $X$ there exists a continuous function $f:X\to X\setminus A$ which is $\U$-near to the identity function of $X$.

A function $f:X\to Y$ between topological spaces is called a {\em $Z$-embedding} if $f$ is a topological embedding whose image $f[X]$ is a $Z$-set in $Y$.


Let $C$ be a space. A topological space $X$ is called {\em strongly $C$-universal} if for every open cover $\U$ of $X$, closed subset $B\subseteq C$ and map $f:C\to X$ such that $f{\restriction}B:B\to X$ is a $Z$-embedding there exists a $Z$-embedding $g:C\to X$ such that $g{\restriction}B=f{\restriction}B$ and $g$ is $\U$-near to $f$.

Let $\C$ be a class of spaces. A topological space $X$ is called
\begin{itemize}
\item {\em $\C$-universal} if for every $C\in\C$ there exists a closed topological embedding $f:C\to X$;
\item {\em everywhere $\C$-universal} if for every nonempty open set $U\subseteq X$ there exists a closed topological embedding $f:C\to X$ such that $f[C]\subseteq U$;
\item  {\em strongly $\C$-universal} if $X$ is strongly $C$-universal for every space $C\in\C$.
\end{itemize}

By $\mathbf\Pi^0_1$ and $\mathbf \Pi^0_2$ we denote the classes of compact metrizable and Polish spaces, respectively.

\begin{example}
\begin{enumerate}
\item The Hilbert cube $[0,1]^\w$ is strongly $\mathbf \Pi^0_1$-universal \cite{Tor80}.
\item The countable product of lines $\IR^\w$ is strongly $\mathbf \Pi^0_2$-universal  \cite{Tor81}.
\end{enumerate}
\end{example}

A subset $A$ of a topological space $X$ is called a 
\begin{itemize}
\item a {\em retract} in $X$ if there exists a continuous map $r:X\to A$ such that $r(a)=a$ for all $x\in A$;
\item a {\em neighborhood retract} if $A$ is closed in $X$ and $A$ is a retract of some open set $U\subseteq X$ containing $A$.
\end{itemize}

A topological space $X$ is called an {\em absolute} ({\em neighborhood}) {\em retract} if $X$ is metrizable and $X$ is a (neighborhood) retract in each metrizable space containing $X$ as a closed subspace. Absolute neighborhood retracts (briefly, ANRs) play an important role in Geometric and Infinite-Dimensional Topology \cite{Sak1}, \cite{Sak2}.

A topological space $X$ is defined to satisfy the {\em strong discrete approximation property} (briefly, SDAP) if for every open cover $\U$ of $X$ and any map $f:\oplus_{n\in\IN}[0,1]^n\to X$ from the topological sum of finite-dimensional cubes,  there exist a map $g:\oplus_{n\in\IN}[0,1]^n\to X$ such that $g$ is $\U$-near to $f$ and the family $\big(g\big[[0,1]^n\big]\big)_{n\in\IN}$ is discrete in $X$, which means that each point $x\in X$ has a neighborhood $O_x\subseteq X$ that meets at most one set $g\big[[0,1]^n\big]$, $n\in\IN$.

The strong discrete approximation property is a crucial ingredient in the famous Toru\'nczyk characterization of $\IR^\w$-manifolds \cite{Tor80}. A topological space $M$ is called a {\em manifold modeled on a space $E$} (briefly, an {\em $E$-manifold}) if each point $x\in M$ has a neighborhood, homeomorphic to an open subset of the space $E$.

\begin{theorem}[Toru\'nczyk] A topological space $X$ is an $\IR^\w$-manifold if and only if $X$ is a Polish ANR satisfying SDAP.
\end{theorem}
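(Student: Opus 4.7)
The plan is to prove both implications separately, with necessity being routine and sufficiency being the deep direction.

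For necessity, I would first verify that $\IR^\w$ itself satisfies SDAP: given a map $f\colon \oplus_{n\in\IN}[0,1]^n\to \IR^\w$ and an open cover $\U$, compose $f{\restriction}[0,1]^n$ with a small translation along a fresh coordinate direction $e_n$ to push the images into mutually far-apart ``slabs'' of $\IR^\w$, yielding a $\U$-near map $g$ whose cube images form a discrete family. Since $\IR^\w$ is a Polish AR and ``Polish ANR'' is a local property inherited by open subsets (Hanner's theorem), any $\IR^\w$-manifold is automatically a Polish ANR. The SDAP transfers from the model space to any manifold over it by a standard patching argument: subdivide each cube so that each piece maps into a single chart, apply SDAP in the chart, and reassemble using a partition of unity subordinate to the charts while preserving discreteness.

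For sufficiency, the strategy is to reduce to the \emph{absolute} version: a Polish AR is homeomorphic to $\IR^\w$ if and only if it satisfies SDAP. Granting this, one argues locally. For each $x\in X$, choose an open neighborhood $U$ of $x$ that is contractible and an AR (possible in an ANR by taking small convex neighborhoods in a suitable embedding and retracting); verify that $U$ inherits SDAP from $X$ using the neighborhood retraction and the fact that discreteness of a family is a local condition; and conclude $U\cong \IR^\w$ by the absolute version. Hence every point of $X$ has a neighborhood homeomorphic to an open subset of $\IR^\w$, i.e. $X$ is an $\IR^\w$-manifold.

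The absolute version is where the bulk of the work lies, and it is the main obstacle. One constructs a homeomorphism $h\colon X\to \IR^\w$ as a uniform limit of maps $f_n\colon X\to \IR^\w$ and $g_n\colon \IR^\w\to X$ built alternately in a back-and-forth manner. At each stage one invokes SDAP on the appropriate side to replace a dense approximation by a $Z$-embedding whose cell-images form a discrete family of arbitrarily small diameter, and then applies the $Z$-set unknotting theorem for Polish ARs to modify this embedding so that it agrees with the previously constructed approximation on a prescribed compact $Z$-set. The delicate point is the balanced choice of gauges: the $\e_n$-control must guarantee that both $(f_n)$ and $(g_n)$ converge uniformly and that their limits are mutually inverse, not just continuous. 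Once this convergence is handled, SDAP guarantees discrete approximation and $Z$-set unknotting guarantees compatibility between stages, and the limit map is the desired homeomorphism.
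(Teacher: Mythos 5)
This theorem is not proved in the paper at all: it is Toru\'nczyk's deep characterization of $\IR^\w$-manifolds, quoted as background with a citation, so there is no proof in the paper to compare yours against. Judged on its own, your proposal is an outline of the known strategy rather than a proof, and it has genuine gaps at exactly the points where the real work lies.

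First, the reduction to the absolute case is unjustified. From local contractibility of an ANR you only get, for each neighborhood $U$ of $x$, a smaller neighborhood contractible \emph{in} $U$, not a contractible open neighborhood that is itself an AR; your device of ``small convex neighborhoods in a suitable embedding and retracting'' produces images of convex sets under a retraction, which need be neither open in $X$ nor absolute retracts. Whether points of an ANR admit arbitrarily small AR (or even contractible) open neighborhoods is a delicate matter, and Toru\'nczyk's treatment of the manifold case does not proceed this way; one needs either a genuinely local version of the characterization or a different reduction (e.g.\ via manifold factors and stability). Second, and more seriously, your sketch of the absolute case assumes the two key tools --- $Z$-set unknotting and the convergence scheme in which the back-and-forth limits are mutually inverse homeomorphisms --- as if they were available for arbitrary Polish ARs with SDAP. $Z$-set unknotting in that generality is a \emph{consequence} of the space being an $\IR^\w$-manifold, so invoking it here is circular, and the statement that a carefully gauged limit of $Z$-embeddings is a homeomorphism is precisely the near-homeomorphism technology (together with Anderson--Kadec type results identifying the model space) that constitutes the substance of Toru\'nczyk's argument. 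As written, the proposal names the obstacles accurately but does not overcome them; the correct course here is simply to cite Toru\'nczyk, as the paper does.
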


A subset $X$ of a topological space $M$ is called {\em homotopy dense} if there exists a continuous map $H:M\times[0,1]\to M$ such that $H(x,0)=x$ and $H(x,t)\in X$ for all $x\in M$ and $t\in(0,1]$. 

The following characterization of ANRs with SDAP was proved by the author in \cite{Ban98}.

\begin{theorem}[Banakh] A topological space is an ANR with SDAP if and only if $X$ is homeomorphic to a homotopy dense subset of an $\IR^\w$-manifold.
\end{theorem}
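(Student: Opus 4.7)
My plan is to prove the two implications separately.

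The sufficiency direction --- that a homotopy dense subspace $X$ of an $\IR^\w$-manifold $M$ is an ANR with SDAP --- should go by direct transfer. By Toru\'nczyk's characterization, $M$ is a Polish ANR with SDAP. That $X$ is an ANR follows from the standard fact that homotopy density preserves ANR-ness: given a retraction $r:U\to M$ defined on a neighborhood of $M$ in some ambient Polish ANR, the map $x\mapsto H(r(x),t)$ for small $t$ lands in $X$ and, restricted to a suitable smaller neighborhood, provides a retraction onto $X$. For SDAP of $X$, I would take a map $f:\oplus_n[0,1]^n\to X$ and an open cover $\U$ of $X$, extend $\U$ to an open cover $\V$ of $M$ by enlarging each $U\in\U$ to an open set of $M$ meeting $X$ in a subset of $U$, apply SDAP of $M$ to obtain a discrete-family approximation $g_0:\oplus_n[0,1]^n\to M$, and then compose each $g_0\restriction [0,1]^n$ with $H(\cdot,t_n)$ for a sequence $t_n>0$ chosen so small that the result is still $\V$-near to $g_0$ and the family $(H(g_0[[0,1]^n],t_n))_{n\in\IN}$ remains discrete in $M$, hence discrete in $X$. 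Compactness of the cubes and uniform continuity of $H$ on each $\overline{g_0[[0,1]^n]}\times[0,1]$ make this tuning possible.

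For the necessity direction --- ANR with SDAP implies a homotopy dense embedding into an $\IR^\w$-manifold --- I would first appeal to the standard ANR-completion result: every separable metrizable ANR $X$ embeds as a dense subspace of some Polish ANR $\hat X$, for instance by writing $X$ as a retract of an open set $U$ in a normed linear space $E$ and passing to a suitable $G_\delta$-envelope in the completion of $E$ whose retraction extends. The candidate manifold is $M:=\hat X$ (possibly after a mild thickening, e.g.\ an infinite mapping telescope built from maps $g_n:M\to X$ supplied by SDAP, in case the direct completion is not yet adequate). I would then have to verify: (i) $X$ is homotopy dense in $M$, which should follow by using SDAP-style approximating maps shifting points of $M$ into $X$ and splicing them into the required homotopy $H:M\times[0,1]\to M$; (ii) $M$ is a Polish ANR, true by construction; and (iii) $M$ has SDAP, transferred from the SDAP of $X$ via the density of $X$ in $M$ --- every map into $M$ is perturbed into $X$, approximated discretely there, and the discreteness promoted back up. Toru\'nczyk's theorem then identifies $M$ as an $\IR^\w$-manifold.

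The main obstacle is the necessity direction, and within it the simultaneous achievement of (i) homotopy density and (iii) SDAP of $M$. The naive completion $\hat X$ need not contain $X$ as homotopy dense: there may be points of $\hat X\setminus X$ that cannot be continuously pushed into $X$ without invoking the discrete approximations provided by SDAP. Conversely, promoting a discrete family in $X$ to a discrete family in $\hat X$ requires that the compact images, already closed in $\hat X$, be locally isolated even at points of $\hat X\setminus X$, which is a nontrivial local-finiteness issue. A careful construction of the homotopy $H$ from SDAP-witnessing maps, tuned so that its tracks reach into every point of $\hat X$ while respecting discreteness, is the technical heart of the argument.
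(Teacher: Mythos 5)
This statement is quoted in the paper from \cite{Ban98} and is not proved there, so there is no internal argument to compare with; judged on its own merits, your proposal establishes only half of the theorem. The sufficiency direction (a homotopy dense subspace $X$ of an $\IR^\w$-manifold $M$ is an ANR with SDAP) is correct in outline, with two repairable details. First, the map $x\mapsto H(r(x),t)$ with a \emph{constant} small $t$ is not a retraction onto $X$, since it moves the points of $X$ themselves; the standard fix is to extend the inclusion $X\hookrightarrow M$ to a map $\bar f\colon V\to M$ on a neighborhood $V$ of $X$ in the ambient space and to use $z\mapsto H(\bar f(z),\e(z))$ with a function $\e$ vanishing exactly on $X$. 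Second, your ``tuning'' of the parameters $t_n$ tacitly uses the lemma that for a discrete family of compacta $(K_n)$ in a metric space one can choose $\e_n>0$ so that any sets contained in the $\e_n$-neighborhoods of the $K_n$ again form a discrete family; discreteness is not preserved by small perturbations without such an adapted choice. Both points are standard.

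The necessity direction, however, contains a genuine gap, and you say so yourself: after completing $X$ to a Polish ANR $\hat X$ (which is indeed possible), everything hinges on (i) arranging that $X$ be homotopy dense in the completion and (iii) that the completion satisfy SDAP, and your text replaces these steps by the remark that they are ``the technical heart of the argument,'' offering only an undeveloped suggestion of a mapping-telescope thickening. This is precisely where the content of the theorem lies: the proof in \cite{Ban98} (see also \cite{BRZ}, \S1.3) must use SDAP of $X$ in an essential way to build a completion whose complement is locally homotopy negligible (equivalently, to make $X$ homotopy dense), and even granted homotopy density, transferring SDAP from $X$ up to $M$ is not routine, because a family that is discrete in $X$ may accumulate at points of $M\setminus X$, so the ``promotion back up'' you invoke needs a real argument. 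As it stands, the necessity half is a plan rather than a proof, and the plan stops exactly at the step that makes the theorem nontrivial.
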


Let $\C$ be a class of topological spaces. A topological space $X$ is called {\em $\C$-absorbing} if 
\begin{itemize}
\item $X$ is an absolute neighborhood retract satisfying SDAP;
\item $X=\bigcup_{n\in\w}X_n$ where each $X_n$ is a $Z$-set in $X$ and $X_n\in\C$;
\item $X$ is strongly $\C$-universal.
\end{itemize}

A class $\C$ of spaces is called {\em topological} if for any space $X\in\C$ and any homeomorphism $h:X\to Y$ the space $Y$ also belongs to the class $\C$. 

The following characterization theorem due to Bestvina and Mogilski \cite{BM} is one of the most important results of infinite-dimensional topology. Its proof can be found in \cite[1.6.3]{BRZ}.

\begin{theorem}[Bestvina--Mogilski]\label{t:BM} Let $\C$ be a class of topological spaces. Two $\C$-absorbing spaces are homeomorphic if and only if they are homotopically equivalent. In particular, any two $\C$-absorbing absolute retracts are homeomorphic.
\end{theorem}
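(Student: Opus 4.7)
The plan is to follow the classical back-and-forth scheme of infinite-dimensional topology, combining Toru\'nczyk's classification of $\IR^\w$-manifolds with the $Z$-set unknotting theorem and strong $\C$-universality. I will work at the level of ambient $\IR^\w$-manifolds, produce a convergent sequence of homeomorphisms between them, and then restrict the limit to the absorbing spaces themselves.

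First, by the Banakh characterization recalled above, each $\C$-absorbing space embeds as a homotopy dense subset of an $\IR^\w$-manifold; fix such embeddings $X\subseteq M_X$ and $Y\subseteq M_Y$. Using the homotopy densities, a homotopy equivalence $f:X\to Y$ extends to a homotopy equivalence $\bar f:M_X\to M_Y$, which by Toru\'nczyk's classification theorem is homotopic to a homeomorphism $h_0:M_X\to M_Y$. Decompose $X=\bigcup_n X_n$ and $Y=\bigcup_n Y_n$ as countable unions of $Z$-sets from $\C$, and construct inductively homeomorphisms $h_n:M_X\to M_Y$ so that odd-indexed steps place $X_k$ inside $Y$ while even-indexed steps place $h_n^{-1}(Y_k)$ inside $X$. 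At each step, strong $\C$-universality of $Y$ approximates $h_n\restriction X_k$ by a $Z$-embedding of $X_k$ into $Y$ agreeing with the previously placed material, and the $Z$-set unknotting theorem in $\IR^\w$-manifolds extends this approximation to a global homeomorphism $h_{n+1}$ uniformly close to $h_n$; the analogous construction applied to $h_n^{-1}$ handles the $Y_k$-side.

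The main obstacle is the bookkeeping that prevents later perturbations from destroying the alignments $h_m(X_j)\subseteq Y$ established earlier. To handle this I would run a diminishing-controls argument: choose $\e_n\downarrow 0$ with $\sum_n\e_n<\infty$ and enforce $d(h_{n+1},h_n)<\e_n$ in the sup-metric associated with complete bounded metrics on $M_X$ and $M_Y$; at step $n+1$, shrink $\e_{n+1}$ below the ``safety margin'' provided by an ANR neighborhood of $Y$ in $M_Y$ retracting onto $Y$, inside which each previously placed $X_j$ still lies in $Y$, and symmetrically for the inverses and the $Y_j$. Then $h_\infty=\lim_n h_n$ is a well-defined homeomorphism $M_X\to M_Y$ with $h_\infty(X)=Y$, whose restriction is the desired homeomorphism $X\to Y$. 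The addendum about absolute retracts is immediate, since any two ARs are contractible and hence homotopy equivalent, so the first part applies.
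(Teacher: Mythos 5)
The paper does not prove Theorem~\ref{t:BM} at all: it is quoted from Bestvina--Mogilski with the proof referenced to \cite[1.6.3]{BRZ}, so your plan can only be measured against that standard argument, whose overall architecture (embed $X,Y$ as homotopy dense subsets of $\IR^\w$-manifolds via the Banakh characterization, use Toru\'nczyk's classification to get an ambient homeomorphism, then a back-and-forth absorption using strong $\C$-universality and $Z$-set unknotting) you do reproduce correctly. The genuine gap is in your mechanism for preserving the alignments: the ``safety margin provided by an ANR neighborhood of $Y$ in $M_Y$ retracting onto $Y$'' does not exist in any useful sense, because $Y$ is a homotopy dense $\sigma Z$-set in $M_Y$, hence typically dense, meager and not closed; points $\e$-close to $Y$ need not lie in $Y$, so no smallness condition on $d(h_{n+1},h_n)$ can guarantee that $h_{n+1}[X_j]\subseteq Y$ once $h_n[X_j]\subseteq Y$, and you cannot repair this by composing with a retraction without destroying bijectivity. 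What actually makes the bookkeeping work is that the construction is \emph{relative}: at each step the new $Z$-embedding is required to coincide with the previous map on the union of the already absorbed $Z$-sets (this is precisely the role of the closed set $B$ in the definition of strong $C$-universality), and the ambient adjustment comes from the relative version of $Z$-set unknotting, so that $h_m{\restriction}X_j$ is eventually constant in $m$; the inclusion $h_\infty[X_j]\subseteq Y$ then holds because the restriction stabilizes, not because of a metric estimate.

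A second gap is the limit step: requiring $d(h_{n+1},h_n)<\e_n$ with $\sum_n\e_n<\infty$ only yields a continuous limit map, not a homeomorphism. One needs the inductive convergence criterion, in which $\e_{n+1}$ is chosen \emph{after} $h_n$ (small with respect to data depending on $h_n$ and $h_n^{-1}$) so that both $(h_n)$ and $(h_n^{-1})$ converge and the limits are mutually inverse; your adaptive shrinking is invoked only for the (flawed) safety-margin purpose. Finally, two smaller omissions: before applying strong $\C$-universality of $Y$ you must first push $h_n{\restriction}X_k$ (which maps into $M_Y$, not into $Y$) into $Y$ with control, using homotopy density; and unknotting is applied in the manifold $M_Y$, so you must check that the relevant images are $Z$-sets of $M_Y$ and closed there --- nontrivial since the sets $X_k\in\C$ need not be compact. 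These are exactly the points carried out carefully in \cite[1.6.3]{BRZ}; as written, your stabilization argument would fail and the passage to the limit is not justified.
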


A similar characterization theorem holds for pairs of spaces. By a {\em pair} of topological spaces we understand an ordered pair $(X,Y)$ of topological spaces $Y\subseteq X$. A pair of spaces $(K,X)$ is called {\em compact} if the space $K$ is compact. Two pairs $(X,Y)$ and $(X',Y')$ are {\em homeomorphic} if there exists a homeomorphism $h:X\to X'$ such that $h[Y]=Y'$.

For two classes of spaces $\C,\mathcal D$ denote by $(\C,\mathcal D)$ the class of pairs $(C,D)$ such that $C\in\C$, $D\in\mathcal D$, and $D\subseteq C$. A class of pairs $\vC$ is called {\em topological} if for any pair $(X,C)\in\vC$ and any homeomorphism $h:X\to Y$ the pair $(Y,h[C])$ belongs to the class $\vC$.

Let $(K,C)$ be a pair of spaces. A pair of spaces $(X,Y)$ is called {\em strongly $(K,C)$-universal} if for every open cover $\U$ of $X$, closed subset $B\subseteq K$ and map $f:K\to X$ such that $f{\restriction}B$ is a $Z$-embedding with $(f{\restriction}B)^{-1}[Y]=B\cap C$, there exists a $Z$-embedding $g:K\to X$ such that $g{\restriction}B=f{\restriction}B$, $g^{-1}[Y]=C$, and $g$ is $\U$-near to $f$.

Let $\vC$ be a class of pairs. A pair of topological spaces $(M,X)$ is called
\begin{itemize}
\item {\em $\vC$-universal} if for every pair $(K,C)\in \vC$ there exists a closed topological embedding $f:K\to M$ such that $f^{-1}[X]=C$;
\item {\em everywhere $\vC$-universal} if for every nonempty open set $U\subseteq M$ and pair $(K,C)\in \vC$ there exists a closed topological embedding $f:K\to M$ such that $f^{-1}[X]=C$ and $f[K]\subseteq U$;
\item {\em strongly $\vC$-universal} if it is strongly $(K,C)$-universal for every pair $(K,C)\in\vC$. 
\end{itemize}

A pair of spaces $(X,Y)$ is called {\em $\vec \C$-absorbing} if 
\begin{itemize}
\item the pair $(X,Y)$ is strongly $\vC$-universal;
\item there exists a sequence $(X_n)_{n\in\w}$ of $Z$-sets in $X$ such that $Y\subseteq \bigcup_{n\in\w}X_n$ and $(X_n,X_n\cap Y)\in\vec \C$ for every $n\in\w$.
\end{itemize}

The following pair counterpart of Bestvina--Mogilski Theorem~\ref{t:BM} can be found in \cite[1.7.7]{BRZ}.

\begin{theorem} Let $\vec\C$ be a topological class of pairs and $M$ be a manifold modeled on $[0,1]^\w$ or $\IR^\w$. Any two $\vec\C$-absorbing pairs $(M,X)$ and $(M,Y)$ are homeomorphic.
\end{theorem}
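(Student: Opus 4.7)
The plan is to run a back-and-forth construction analogous to the one behind the Bestvina--Mogilski Theorem~\ref{t:BM}, but carried out for pairs. Three ingredients are essential: the strong $\vC$-universality of both pairs $(M,X)$ and $(M,Y)$, the $Z$-set absorbing skeleta afforded by the definition of a $\vC$-absorbing pair, and the $Z$-set unknotting theorem for manifolds modeled on $[0,1]^\w$ or $\IR^\w$ --- every homeomorphism between two $Z$-sets of $M$ that is $\U$-close to the identity extends to an ambient homeomorphism of $M$ that is $\mathrm{St}(\U)$-close to the identity.

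First I would fix increasing sequences $(X_n)_{n\in\w}$ and $(Y_n)_{n\in\w}$ of $Z$-sets as in the definition of $\vC$-absorbing, a compatible complete metric on $M$, and a sequence of open covers $\U_n$ with mesh and star-mesh shrinking fast enough to guarantee uniform convergence both of sequences of homeomorphisms and of their inverses. Then I would construct inductively a sequence of homeomorphisms $h_n : M\to M$ together with $Z$-sets $A_n\subseteq M$ satisfying: $A_n$ contains $X_1\cup\cdots\cup X_n$ and $h_n^{-1}[Y_1\cup\cdots\cup Y_n]$; $h_{n+1}$ coincides with $h_n$ on $A_n$; $h_n$ is $\U_n$-near to $h_{n-1}$; and $h_n^{-1}[Y]\cap A_n = X\cap A_n$.

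Each step splits into two halves. In the odd half I apply strong $(K,C)$-universality of $(M,Y)$ with $(K,C)=(X_{n+1},X_{n+1}\cap X)$ and closed parameter $B$ equal to $A_n\cap X_{n+1}$; this yields a $Z$-embedding of $X_{n+1}$ into $M$ that extends $h_n|_B$ and pulls $Y$ back exactly to $X_{n+1}\cap X$. The $Z$-set unknotting theorem then promotes this embedding to an ambient homeomorphism of $M$, which becomes $h_{n+1}$. The even half is symmetric, using strong $\vC$-universality of $(M,X)$ to absorb $h_n^{-1}[Y_{n+1}]$ into the growing matched skeleton $A_{n+1}$. The limit $h=\lim_n h_n$ is then a homeomorphism of $M$ with $h[X]=Y$: every point of $X$ eventually lies in some $X_n\subseteq A_n$ where $h$ agrees with $h_n$ and already sends $X$ into $Y$, and the symmetric statement for $Y$ holds under $h^{-1}$.

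The main obstacle is the quantitative bookkeeping on the covers $\U_n$ that is required to simultaneously force Cauchyness of $(h_n)$ and $(h_n^{-1})$, preserve the previously matched skeleton $A_n$ through all later steps (which is precisely why the growing $Z$-set is fed as the closed set $B$ in each application of strong universality), and still leave room for the next $Z$-embedding and its unknotting extension. This is the same quantitative issue as in the single-space Bestvina--Mogilski theorem, and it is handled by the standard device of taking $\U_{n+1}$ to star-refine $\U_n$ with mesh less than $2^{-n}$ and controlling the unknotting perturbations with respect to a sufficiently fine refinement of $\U_n$.
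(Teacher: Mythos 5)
The paper does not actually prove this theorem: it is quoted as background and attributed to \cite[1.7.7]{BRZ}, so there is no internal proof to compare with. Your outline is the standard argument by which that cited result is proved: a back-and-forth induction combining the strong $\vC$-universality of both pairs, the $Z$-set skeleta from the definition of a $\vC$-absorbing pair, $Z$-set unknotting in $[0,1]^\w$- and $\IR^\w$-manifolds, and star-refinement control of the covers $\U_n$ to make $(h_n)$ and $(h_n^{-1})$ converge. In spirit your proposal matches the known proof, and your limit argument (points of $X$ eventually lie in a matched $X_k$, points of $Y$ are eventually absorbed via $h_n^{-1}[Y_k]$) is the right way to conclude $h[X]=Y$, precisely because it relies on exact stabilization of $h_n$ on the matched skeleton rather than on mere closeness ($Y$ is not closed in $M$, so closeness alone gives nothing).

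There is, however, a concrete gap in the inductive step as you describe it: the requirement that $h_{n+1}$ coincide with $h_n$ on all of $A_n$ is not delivered by your mechanism. Feeding $B=A_n\cap X_{n+1}$ into the strong $(X_{n+1},X_{n+1}\cap X)$-universality of $(M,Y)$ forces the new $Z$-embedding $g$ to agree with $h_n$ only on $A_n\cap X_{n+1}$, and the unknotting homeomorphism $\theta$ with $\theta\circ (h_n{\restriction}X_{n+1})=g$ is merely $\mathrm{St}(\U)$-small; nothing makes $\theta$ the identity on $h_n[A_n\setminus X_{n+1}]$, so the invariant $h_{n+1}{\restriction}A_n=h_n{\restriction}A_n$ (and with it your limit argument) is not secured. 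The standard repair is to unknot instead the homeomorphism of the $Z$-set $h_n[X_{n+1}\cup A_n]$ which equals $g\circ(h_n{\restriction}X_{n+1})^{-1}$ on $h_n[X_{n+1}]$ and the identity on $h_n[A_n]$; but for this to be a well-defined embedding one needs $g[X_{n+1}\setminus B]$ to miss $h_n[A_n]\setminus h_n[B]$, a general-position condition that strong universality of pairs does not provide and that must be arranged by an additional approximation which, in the pair setting, must also preserve the exact preimage condition $g^{-1}[Y]=X_{n+1}\cap X$. Note also that you cannot sidestep this by enlarging the compactum to $X_{n+1}\cup A_n$, since the pair $(A_n,A_n\cap X)$ need not belong to $\vC$ (a topological class of pairs need not be closed under finite unions). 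Securing this relative control, symmetrically in the even half, is exactly where the cited proof in \cite{BRZ} does its technical work, so your sketch needs this ingredient spelled out before it can be considered a proof.
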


In some cases, the strong $\C$-universality can be reduced to the strong $(\mathbf\Pi^0_1\cap\C,\C)$-universality, where $\mathbf \Pi^0_1$ denotes the class of compact metrizable spaces. The following proposition was proved in \cite[3.1]{BC}.

\begin{theorem}[Banakh--Cauty]\label{p1} Let $\C$ be a class of spaces, $M$ be an ANR and $X$ be a homotopy dense subset of $M$ such that $X$ has SDAP. If for some pair $(K,C)$,  the pair $(M,X)$ is strongly $(K,C)$-universal, then the space $X$ is strongly $C$-universal.
\end{theorem}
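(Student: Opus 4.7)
My plan is to convert a strong $C$-universality problem for $X$ into a strong $(K,C)$-universality problem for $(M,X)$, solve the latter via the hypothesis, and restrict the solution back to $C$.

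Given an open cover $\V$ of $X$, a subset $B\subseteq C$ closed in $C$, and a map $f\colon C\to X$ with $f{\restriction}B$ a $Z$-embedding in $X$, I would first enlarge $\V$ to an open cover $\U$ of $M$ whose trace on $X$ refines $\V$; this uses paracompactness of $M$ together with the denseness of $X$ in $M$, which follows from homotopy density. Set $B_K:=\overline{B}^K$, the closure of $B$ in $K$; since $B$ is closed in $C$, one checks $B_K\cap C=B$.

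Next, I would build a map $\hat f\colon K\to M$ with the properties needed to feed into strong $(K,C)$-universality: $\hat f{\restriction}B=f{\restriction}B$, the restriction $\hat f{\restriction}B_K$ is a $Z$-embedding in $M$ with $(\hat f{\restriction}B_K)^{-1}[X]=B$, and $\hat f{\restriction}C$ is $\U$-near to $f$. The strategy is to first extend $f{\restriction}B$ over $B_K$ by pushing the ``new'' points $B_K\setminus B$ off $X$ into $M\setminus X$ using the homotopy $H\colon M\times[0,1]\to M$ that witnesses homotopy density, thereby promoting the $Z$-embedding in $X$ to a $Z$-embedding in $M$; then extend this across $K$ using the ANR property of $M$, arranging (with a further application of $H$ and SDAP on $X$) that the extension agrees $\U$-closely with $f$ on $C$.

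Now I apply strong $(K,C)$-universality of $(M,X)$ to $\hat f$ with closed set $B_K$ and cover $\U$, obtaining a $Z$-embedding $\hat g\colon K\to M$ with $\hat g{\restriction}B_K=\hat f{\restriction}B_K$, $\hat g^{-1}[X]=C$, and $\hat g$ $\U$-near to $\hat f$. Setting $g:=\hat g{\restriction}C$, which lands in $X$ because $\hat g^{-1}[X]=C$, one obtains a topological embedding $C\to X$ with $g{\restriction}B=f{\restriction}B$ and $g$ $\V$-near to $f$ by combining the $\U$-closenesses. It remains to check that $g[C]$ is a $Z$-set in $X$: but $g[C]=\hat g[K]\cap X$ is closed in $X$, and the intersection of a $Z$-set of $M$ with a homotopy dense subspace is a $Z$-set there, by composing $Z$-approximations in $M$ with $H_t$ for small $t>0$ to land in $X$ while still avoiding $g[C]$.

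The main obstacle I expect is the second step: constructing $\hat f$ with the preimage condition $(\hat f{\restriction}B_K)^{-1}[X]=B$ while simultaneously ensuring $\hat f{\restriction}B_K$ is a $Z$-embedding in $M$ and $\hat f{\restriction}C$ is $\U$-near to $f$. Pushing $B_K\setminus B$ off $X$ via $H$ is possible in principle but requires care to maintain injectivity and the $Z$-set condition, and the interpolation making $\hat f$ close to $f$ on $C$ demands the homotopy density and SDAP to be combined carefully. Once this extension is in hand, the remainder of the argument is a routine verification relying on standard facts about homotopy dense subspaces of ANRs.
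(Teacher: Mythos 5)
Your final step (that for a $Z$-embedding $\hat g:K\to M$ with $\hat g^{-1}[X]=C$ the restriction $\hat g{\restriction}C$ is a $Z$-embedding of $C$ into $X$, because $\hat g[C]=\hat g[K]\cap X$ and traces of $Z$-sets of $M$ on a homotopy dense subspace are $Z$-sets) is sound, but the reduction feeding the given data into strong $(K,C)$-universality over $K$ cannot work, and the gap is not just technical. First, the map $\hat f$ you need does not exist in general: on $B_K=\overline{B}^K$ its values at points of $B_K\setminus B$ are forced by continuity to be limits of values of $f$ on $B$, and these limits may lie in $X$ (or fail to exist), so the requirements $\hat f{\restriction}B=f{\restriction}B$, $(\hat f{\restriction}B_K)^{-1}[X]=B$ and injectivity on $B_K$ are contradictory. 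Concretely, take $K=[0,1]$, $C=B=\{1/n:n\in\IN\}$ and let $f$ map $B$ injectively onto a compact set of $X$ consisting of a convergent sequence together with its limit (compacta are $Z$-sets in $X$ since $X$ has SDAP, so $f{\restriction}B$ is a legitimate $Z$-embedding); any continuous extension over $B_K=\{0\}\cup B$ sends $0$ to $\lim_n f(1/n)\in X$. Such pairs $(K,C)$ do occur in the hypothesis, e.g. $(M,X)=\big([0,1]^\w,B([0,1]^\w)\big)$ is strongly $(\mathbf\Pi^0_1,\mathbf\Sigma^0_2)$-universal. Note also that the homotopy $H$ witnessing homotopy density pushes points \emph{into} $X$; you have no instrument at all for sending $B_K\setminus B$ into $M\setminus X$. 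Second, even with $B=\emptyset$ the output $g=\hat g{\restriction}C$ extends continuously over $\overline{C}^K$, so it cannot be $\V$-near an arbitrary $f:C\to X$ for a fine cover $\V$: with $K=[0,1]$, $C=\{1/n:n\in\IN\}$ and $f$ oscillating between two distant points of $X$, every continuous $\hat g:K\to M$ has $(\hat g(1/n))_n$ convergent. Strong $C$-universality of $X$ requires approximating \emph{all} maps $f:C\to X$ rel $B$, and these simply are not restrictions of maps defined on $K$. (A smaller, fixable, inaccuracy: an open cover of the dense set $X$ extends only to an open cover of a neighborhood of $X$ in $M$, not of $M$ itself.)

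A telling symptom is that SDAP plays no genuine role in your argument, while it is essential for the statement. The paper does not prove this theorem; it quotes it from Banakh--Cauty \cite[3.1]{BC}. The workable route is the opposite of yours: use strong $(K,C)$-universality of $(M,X)$ only to produce, near any prescribed map $K\to M$ and in particular everywhere in $X$, $Z$-embedded copies of $C$ in $X$ of the form $\hat g{\restriction}C$ (your sound observation), and then invoke the ANR-plus-SDAP machinery for $X$ (the reduction of strong $C$-universality to approximation without the rel-$B$ condition, $Z$-set unknotting/absorption-type gluing, locality of strong universality) to upgrade the existence of such copies to strong $C$-universality of $X$ with the rel-$B$ control. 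That second stage, where SDAP is used, is exactly what your proposal is missing and cannot be replaced by a single application of strong $(K,C)$-universality over $K$.
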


For some nice classes $\C$ the strong $\C$-universality implies the strong $(\mathbf \Pi^0_1\cap\C,\C)$-universality.

A class $\C$ of spaces is defined to be
\begin{itemize}
\item {\em $2^\w$-stable} if for any space $C\in\C$ the product $C\times 2^\w$ of $C$ with the Cantor cube $2^\w=\{0,1\}^\w$ belongs to the class $\C$;
\item {\em compactification admitting} if each space $C\in\C$ is contained in some compact space in the class $\C$;
\item {\em $\mathbf \Pi^0_2$-hereditary} if for any space $C\in\C$, any $G_\delta$-subset of $C$ belongs to the class $\C$;
\item ({\em weakly}) {\em $\mathbf \Sigma^0_1$-additive} if for any compact metrizable space $K$ (with $K\in\C)$, subset $C\subseteq K$ with $C\in\C$ and  $\sigma$-compact set $A\subseteq K$, the union $C\cup A$ belongs to the class $\C$.
\end{itemize}

The following theorem was proved in \cite[4.1]{BC}.

\begin{theorem}[Banakh--Cauty]\label{t:BC} Let $M$ be a Polish ANR and $X$ be a homotopy dense subset in $M$. If the space $X$ is strongly $\C$-universal for some $2^\w$-stable weakly $\mathbf \Sigma^0_1$-additive class of spaces $\C$, then the pair $(M,X)$ is strongly $(\mathbf\Pi^0_1\cap\C,\C)$-universal.
\end{theorem}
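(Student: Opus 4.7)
Fix data $(K,C)\in(\mathbf{\Pi}^0_1\cap\C,\C)$, an open cover $\U$ of $M$, a closed $B\subseteq K$, and $f:K\to M$ with $f{\restriction}B$ a $Z$-embedding satisfying $(f{\restriction}B)^{-1}[X]=B\cap C$. The plan is to produce the required $Z$-embedding $g:K\to M$ in three stages, each exploiting one of the given hypotheses: homotopy density of $X$ in $M$, strong $\C$-universality of $X$, and the closure properties of $\C$.

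In the first stage, I route $C$ into $X$. Choose a continuous $\tau:K\to[0,1]$ with $\tau^{-1}(0)=B$ and set $f_1(k):=H(f(k),\tau(k))$, where $H$ witnesses the homotopy density of $X$ in $M$. Then $f_1{\restriction}B=f{\restriction}B$, $f_1$ sends $C$ into $X$ (on $B\cap C$ because $f(B\cap C)\subseteq X$, and on $C\setminus B$ because $\tau(k)>0$), and $f_1$ can be made as close to $f$ as desired by shrinking $\tau$. Moreover $f_1{\restriction}(B\cap C)=f{\restriction}(B\cap C)$ is a $Z$-embedding into $X$, since a closed subset of a $Z$-set of $M$ lying in the homotopy dense subset $X$ is a $Z$-set of $X$. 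Applying the strong $\C$-universality of $X$ to $f_1{\restriction}C:C\to X$ with the closed subset $B\cap C$ produces a $Z$-embedding $h_C:C\to X$ close to $f_1{\restriction}C$ with $h_C{\restriction}(B\cap C)=f{\restriction}(B\cap C)$.

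In the second stage, I weld $f{\restriction}B$ and $h_C$ into a single $Z$-embedding $g_0:K\to M$ that is $\U$-near $f$. Since $B\cup C$ need not be closed in $K$, I first produce a $Z$-embedding $f_2:K\to M$ extending $f{\restriction}B$ by the standard $Z$-approximation of maps into Polish ANRs (valid here because the hypotheses force $M$ to have SDAP), then homotope $f_2$ on a small neighborhood of $C$ so that the new map coincides with $h_C$ on $C$ while remaining a $Z$-embedding (an open condition under sufficiently small perturbations) and keeping the restriction to $B$ unchanged. The output $g_0$ then satisfies $g_0{\restriction}B=f{\restriction}B$ and $g_0[C]\subseteq X$, so in particular $g_0^{-1}[X]\supseteq C$.

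The third stage, which I expect to be the principal obstacle, is to upgrade $g_0^{-1}[X]\supseteq C$ to the equality $g^{-1}[X]=C$ by further perturbing $g_0$ on $K\setminus(B\cup C)$ so that its image misses $X$ there, without disturbing the $Z$-embedding condition or the prescribed values on $B$ and $C$. This is where the two structural hypotheses on $\C$ enter. Choosing a countable dense $D\subseteq K\setminus C$, weak $\mathbf{\Sigma}^0_1$-additivity together with $K\in\mathbf{\Pi}^0_1\cap\C$ and $C\in\C$ gives $C\cup D\in\C$, and $2^\w$-stability then allows passage to $(C\cup D)\times 2^\w\in\C$, which parametrizes via another use of strong $\C$-universality of $X$ a continuum-indexed family of candidate $Z$-embeddings $K\to M$. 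A selection argument extracts from this family a perturbation of $g_0$ that simultaneously moves the images of all points in $K\setminus C$ off $X$. The main technical difficulty lies in making this selection compatible with the $Z$-embedding requirement and the prescribed behaviour on $B\cup C$; the role of $2^\w$-stability is to supply the perturbation flexibility, and that of weak $\mathbf{\Sigma}^0_1$-additivity is to keep the enlarged domains of the perturbations within $\C$, so that strong $\C$-universality of $X$ remains applicable at every iteration.
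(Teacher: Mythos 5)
Your first stage is sound (pushing $C$ into $X$ with the homotopy $H$ and a function $\tau$ vanishing exactly on $B$, then applying strong $\C$-universality of $X$ to $f_1{\restriction}C$ rel $B\cap C$ is exactly how such arguments begin, and the fact you invoke about $Z$-sets of $M$ inside the homotopy dense $X$ being $Z$-sets of $X$ is true). The proof collapses at stages 2 and 3. In stage 2, ``remaining a $Z$-embedding (an open condition under sufficiently small perturbations)'' is false: injectivity is not stable under arbitrarily small perturbations (already for an arc in the plane), and one cannot ``homotope on a small neighborhood of $C$'' so as to coincide with $h_C$ on $C$ while keeping $f_2$ elsewhere, because $C$ is in general a non-closed subset of $K$ with both $C$ and $K\setminus C$ dense, so a neighborhood of $C$ may be all of $K$ and a map defined piecewise along the non-closed set $C$ need not even be continuous. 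Also the parenthetical claim that the hypotheses force $M$ to have SDAP is wrong: $M=[0,1]^\w$ with $X$ the pseudo-boundary and $\C=\mathbf\Sigma^0_2$ satisfies all hypotheses, yet no compact space has SDAP; so the ``standard $Z$-approximation'' you appeal to has no basis as stated.

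More importantly, the heart of the theorem --- the mechanism that certifies that every point of $K\setminus C$ lands \emph{outside} $X$ --- is missing, and it is exactly the part you defer as ``the main technical difficulty.'' Note that the paper does not reprove this result (it quotes \cite[4.1]{BC}); in the Banakh--Cauty argument one does not correct a map in separate passes over $C$ and $K\setminus C$ (this cannot be done by local perturbations when both sets are dense), but works once and globally in $K\times 2^\w$ with the subspace $\tilde C=(C\times 2^\w)\cup(K\times Q)$, $Q$ countable dense in $2^\w$, which lies in $\C$ by $2^\w$-stability and weak $\mathbf\Sigma^0_1$-additivity. Using strong $\C$-universality of $X$ inductively with rapidly refining control, and the completeness of $M$ (Polishness, which your sketch never uses), one builds a limit map $\bar g:K\times 2^\w\to M$ agreeing with the prescribed data over $B$, restricting to a $Z$-embedding $u$ of $\tilde C$ into $X$, and with image contained in the $M$-closure of $u[\tilde C]$; since $u[\tilde C]$ is a $Z$-set, hence closed in $X$, that closure meets $X$ exactly in $u[\tilde C]$, and it is a $Z$-set of $M$ by homotopy density of $X$. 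Restricting $\bar g$ to a fiber $K\times\{\sigma\}$ with $\sigma\in 2^\w\setminus Q$ then yields the required $Z$-embedding $g$ with $g^{-1}[X]=C$. Your substitute $(C\cup D)\times 2^\w$ with $D$ dense in $K\setminus C$ points the wrong way: strong universality would embed the points of $D$ \emph{into} $X$, whereas they must end up off $X$; and no ``selection argument'' is described that could convert such a family into the needed map, so the decisive idea of the proof is absent.
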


For some nice classes of pairs $\vC$ the strong $\vC$-universality is equivalent to the strong $\vC$-preuni\-versality. 

Let $\vC$ be a classes of compact pairs. A pair $(M,X)$ of topological spaces is called
\begin{itemize}
\item {\em $\vC$-preuniversal} if for every pair $(K,C)\in\vC$ there exists a  map $f:K\to M$ such that $f^{-1}[X]=C$;
\item {\em everywhere $\vC$-preuniversal} if for every nonempty open set $U\subseteq M$ and pair $(K,C)\in\vC$ there exists a  map $f:K\to M$ such that $f^{-1}[X]=C$ and $f[K]\subseteq U$;
\item {\em strongly $\vC$-preuniversal} if for every open cover $\U$ of $X$, pair $(K,C)\in\vC$, closed set $B\subseteq K$ and map $f:K\to M$ such that $(f{\restriction}B)^{-1}[X]=B\cap C$, there exists a map $g:K\to M$ such that $g{\restriction}B=f{\restriction}B$, $g^{-1}[X]=C$ and $g$ is $\U$-near to $f$.
\end{itemize}

The following useful characterization was proved in \cite[\S3.2]{BRZ}.

\begin{theorem}\label{t5} Let $\C$ be a $2^\w$-stable $\mathbf \Pi^0_2$-hereditary weakly $\mathbf \Sigma^0_2$-additive class of spaces. Let $(M,X)$ be a pair of spaces (such that $M$ is an ANR and $X$ is a homotopy dense subset in $M$). The pair $(M,X)$ is (strongly) $(\mathbf \Pi^0_1\cap\C,\C)$-universal if and only if it is (strongly) $(\mathbf \Pi^0_1\cap\C,\C)$-preuniversal.
\end{theorem}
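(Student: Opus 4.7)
The non-strong direction $(\mathbf\Pi^0_1\cap\C,\C)$-universal $\Rightarrow$ $(\mathbf\Pi^0_1\cap\C,\C)$-preuniversal is immediate, since a closed topological embedding is in particular a continuous map. The strong analog is slightly subtler: strong preuniversality admits boundary maps $f{\restriction}B$ that need not be $Z$-embeddings, but using that $(B,B\cap C)\in(\mathbf\Pi^0_1\cap\C,\C)$ (by $\mathbf\Pi^0_2$-heredity applied to the compact $K\in\C$), one first replaces $f{\restriction}B$ by a $Z$-embedding close to it with the correct preimage, applies strong universality to the adjusted input, and finally interpolates back along the deformation $H\colon M\times[0,1]\to M$ supplied by the homotopy density of $X$.

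The substantive direction is (strongly) preuniversal $\Rightarrow$ (strongly) universal; I describe the strong case, the non-strong case following by specializing to $B=\emptyset$. Given the data $(K,C,\U,B,f)$, first apply strong preuniversality to produce a map $f_1\colon K\to M$ that is $\U$-near $f$, extends $f{\restriction}B$, and satisfies $f_1^{-1}[X]=C$. The remaining task is to upgrade $f_1$ into a $Z$-embedding $g$ while preserving both $g{\restriction}B=f{\restriction}B$ and $g^{-1}[X]=C$. The plan is to enlarge the domain to the compactum $K\times 2^\w\in\mathbf\Pi^0_1\cap\C$ (by $2^\w$-stability) with target preimage $C\times 2^\w\in\C$, lift $f_1$ along the projection $\pi_K\colon K\times 2^\w\to K$, and use standard ANR-approximation techniques inside $M$ to produce a $Z$-embedding $h\colon K\times 2^\w\to M$ that approximates the lift and still satisfies $h^{-1}[X]=C\times 2^\w$. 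The restriction $g:=h{\restriction}(K\times\{\sigma_0\})$ for an appropriate fiber $\sigma_0\in 2^\w$ then delivers the required $Z$-embedding of $K$ with $g^{-1}[X]=C$.

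The main obstacle is preserving the preimage equation under the approximation by a $Z$-embedding. Because $X$ is generically neither open nor closed in $M$, a $C^0$-small perturbation can shift the preimage $g^{-1}[X]$ unpredictably. The three class hypotheses enter here in complementary roles: the homotopy density of $X$ in $M$ supplies the deformation $H$ along which points can be pushed into $X$ on demand; $\mathbf\Pi^0_2$-heredity guarantees that $G_\delta$ subsets of existing preimages in $\C$ remain in $\C$; and weak $\mathbf\Sigma^0_2$-additivity lets us absorb $\sigma$-compact corrections during the construction. The $Z$-embedding $h$ is built by iterated refinement in which each approximation step toward injectivity and the $Z$-set condition is followed by a preimage-correction step calling strong preuniversality as a subroutine; the closure properties of $\C$ ensure that the intermediate preimage sets remain admissible inputs for this subroutine, and the limiting map is the desired $Z$-embedding.
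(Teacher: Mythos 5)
First, note that the paper itself gives no proof of Theorem~\ref{t5}: it is quoted from \cite[\S3.2]{BRZ}, so there is no internal argument to compare with; judged on its own terms, your outline has genuine gaps exactly where the content of the theorem lies. In the substantive direction (preuniversal $\Rightarrow$ universal) the decisive step is your appeal to ``standard ANR-approximation techniques'' to turn the lift of $f_1$ into a $Z$-embedding $h\colon K\times 2^\w\to M$ with $h^{-1}[X]=C\times 2^\w$. No such standard techniques exist at this level of generality: Theorem~\ref{t5} assumes only that $M$ is an ANR and $X$ is homotopy dense (no Polishness, no SDAP), and in a bare ANR maps of compacta need not be approximable by embeddings at all, compact sets need not be $Z$-sets, and nothing controls the preimage of $X$ (which is in general neither open nor closed in $M$) under a perturbation. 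Producing $Z$-embeddings with prescribed preimage from mere preuniversality is the whole theorem, so this step assumes the conclusion. The fallback ``iterated refinement'' does not repair this: each preimage correction by the preuniversality subroutine is only $\U$-small and can destroy the injectivity and $Z$-set progress of the preceding step, and since $X$ is neither open nor closed the condition $g^{-1}[X]=C$ is not stable under uniform limits, so one must specify how the scales are chosen so that the limit map exists, is injective, has $Z$-set image, and has preimage exactly $C$; none of this bookkeeping is supplied, and it is precisely there that $2^\w$-stability, $\mathbf\Pi^0_2$-heredity and weak $\mathbf\Sigma^0_2$-additivity are actually used in \cite[\S3.2]{BRZ}, rather than merely being assigned plausible ``roles''. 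The $K\times2^\w$ device also collides with the rel-$B$ requirement: the conclusion demands $g{\restriction}B=f{\restriction}B$ exactly, but a $Z$-embedding of $K\times2^\w$ cannot extend the non-injective map $f_1\circ\pi_K$ on $B\times2^\w$, and restricting an approximation of the lift to a fiber gives only approximate agreement with $f$ on $B$; one has to damp the Cantor factor to a point along $B$, which your outline does not do.

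The direction you call easier is not closed either. For strong universality $\Rightarrow$ strong preuniversality you correctly observe that $(B,B\cap C)\in(\mathbf\Pi^0_1\cap\C,\C)$ and propose to replace $f{\restriction}B$ by a nearby $Z$-embedding with correct preimage, apply strong universality, and ``interpolate back along $H$''. But the output must satisfy $g{\restriction}B=f{\restriction}B$ exactly, and the interpolation region is where the preimage condition breaks: near a point of $B$ at which both $C$ and $K\setminus C$ accumulate, the corrected map must take values in $X$ exactly over $C$ and outside $X$ exactly over its complement, and no pointwise interpolation through $H$ (whose values at all positive times lie in $X$) can guarantee this. This rel-boundary preimage control is exactly the difficulty that the machinery of Section~\ref{s:main} (the maps $H$, $\lambda$ and the operations $*_n$) is built to overcome in the algebraic setting, and that \cite[\S3.2]{BRZ} handles by a different but equally careful construction; your outline identifies the obstacles but does not resolve them.
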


We have eventually arrived to the main topic of this paper: the strong universality of spaces carrying some algebraic structure. For the structures of a topological group or a convex set this topic has been elaborated in \cite[\S4.2, \S5.3]{BRZ}, \cite{Ban} and \cite{BDP}. 

In this paper we study the strong universality in some topological magmas.

Following Bourbaki, by a {\em magma} we understand a set $M$ endowed with a binary operation $*:M\times M\to M$. 

A magma $M$ is called 
\begin{itemize}
\item {\em commutative} if $x*y=y*x$ for all $x,y\in M$;
\item {\em unital} if  there exists an element $e\in M$ such that $e*x=x=x*e$ for all $x\in M$;
\item {\em idempotent} if $x*x=x$ for all $x\in M$;
\item a {\em semigroup} if its binary operation is associative;
\item a {\em monoid} if $M$ is a unital semigroup;
\item a {\em band} if $M$ is an idempotent semigroup;
\item a {\em semilattice} if $M$ is a commutative band.
\end{itemize}

For subsets $A,B$ of a magma, we put $A*B=\{a*b:a\in A,\;b\in B\}$. 

A subset $A$ of a topological magma $X$ is called
\begin{itemize}
\item a {\em submagma} if $A*A\subseteq A$;
\item an {\em ideal} if $(A*X)\cup (X*A)\subseteq A$;
\item a {\em coideal} if $A$ is a submagma and $X\setminus A$ is an ideal in $X$.
\end{itemize}

A {\em topological magma} is a topological space $M$ endowed with a continuous binary operation $*:M\times M\to M$. 

A topological magma $X$ is called {\em decomposable} if there exist continuous unary operations $u,v:X\to X$ such that $x=u(x)*v(x)$ for every $x\in M$. 

Observe that a topological magma is decomposable if it is unital or idempotent.

The following general theorem is the main result of this paper.

\begin{theorem}\label{t:main} Let $X$ be a coideal in a decomposable topological magma $M$. If $M$ is an ANR and $X$ is homotopy dense in $M$, then for any class of compact pairs $\vC$, the pair $(M,X)$ is strongly $\vC$-preuniversal if and only if it is everywhere $\vC$-preuniversal.
\end{theorem}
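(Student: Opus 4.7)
The forward direction (strongly $\vC$-preuniversal implies everywhere $\vC$-preuniversal) is straightforward. Given $(K,C)\in\vC$ and a nonempty open set $U\subseteq M$, pick $p\in U$, take $f_0\colon K\to M$ to be the constant map at $p$, and apply strong $(K,C)$-preuniversality to $f_0$ with $B=\emptyset$ and the open cover $\U=\{U,\,M\setminus\{p\}\}$ of $M$. The hypothesis $(f_0{\restriction}B)^{-1}[X]=B\cap C$ holds vacuously, and the resulting $g\colon K\to M$ satisfies $g^{-1}[X]=C$. Since the only element of $\U$ containing $p$ is $U$, the $\U$-nearness of $g$ to $f_0$ forces $g[K]\subseteq U$.

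For the converse, fix $(K,C)\in\vC$, an open cover $\U$ of $M$, a closed $B\subseteq K$, and $f\colon K\to M$ with $(f{\restriction}B)^{-1}[X]=B\cap C$. The plan is to combine three tools: the homotopy density of $X$ in $M$, the decomposition $x=u(x)*v(x)$, and everywhere $\vC$-preuniversality applied to the same pair $(K,C)$ to supply auxiliary maps into arbitrarily small open subsets of $M$ with $X$-preimage $C$. First I would pick a continuous $\rho\colon K\to[0,1]$ with $\rho^{-1}(0)=B$ and, using a homotopy $H\colon M\times[0,1]\to M$ witnessing the density, define $f'(k)=H(f(k),\e\rho(k))$ for a sufficiently small $\e$. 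Then $f'{\restriction}B=f{\restriction}B$, $f'(K\setminus B)\subseteq X$, and $f'$ is as $\U$-close to $f$ as desired. Writing $f'=p*h_0$ with $p=u\circ f'$ and $h_0=v\circ f'$, the coideal property forces $p(K\setminus B),\,h_0(K\setminus B)\subseteq X$. I would then take $g=p*h$, where $h\colon K\to M$ is to be constructed so that (a)~$h{\restriction}B=h_0{\restriction}B=(v\circ f){\restriction}B$, (b)~$h$ is close to $h_0$, and (c)~$h^{-1}[X]\cap(K\setminus B)=C\setminus B$. A direct check using the coideal property then shows that $g{\restriction}B=p{\restriction}B*h_0{\restriction}B=f'{\restriction}B=f{\restriction}B$; that $g^{-1}[X]=C$ (on $B$ because $g{\restriction}B=f{\restriction}B$, on $K\setminus B$ because $p(k)\in X$ implies $g(k)\in X\Leftrightarrow h(k)\in X\Leftrightarrow k\in C$); and that $g$ is $\U$-near $f$ by continuity of $*$, provided the closeness bounds at each previous step are chosen fine enough.

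The main obstacle is the construction of $h$, which is a relative version of the original problem with a weaker boundary constraint---only the value $h{\restriction}B$ is specified, with no condition on $h^{-1}[X]\cap B$. I would address it by iterating the same decomposition-and-reduction scheme: writing $h=p_1*h'$ with $p_1=u\circ h_0$ reduces the task to constructing $h'$ with boundary data $(v\circ v\circ f){\restriction}B$ and the same type of $X$-preimage condition on $K\setminus B$; after one or more such reductions the residual map is supplied by everywhere $\vC$-preuniversality applied to $(K,C)$ with a sufficiently small open target, while the accumulated boundary data on $B$ is absorbed continuously using the ANR property of $M$ to extend a map defined on $B$ to a neighborhood. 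The coideal hypothesis together with continuity of $*$ then ensures that the $X$-preimages compose correctly and the approximations propagate across all reductions, yielding the desired $h$ and hence $g$.
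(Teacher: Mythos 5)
Your outer frame is sound and matches the spirit of the paper's argument: pushing $f$ into $X$ off $B$ by homotopy density, decomposing $f'=(u\circ f')*(v\circ f')$, and seeking $g=(u\circ f')*h$ where $h$ agrees with $v\circ f'$ on $B$, is close to $v\circ f'$, and satisfies $h^{-1}[X]\cap(K\setminus B)=C\setminus B$, with the coideal property then giving $g^{-1}[X]=C$. The genuine gap is the construction of $h$, which is exactly where the paper's work lies. Everywhere $\vC$-preuniversality produces, for a prescribed nonempty open $U\subseteq M$, a map $K\to U$ with $X$-preimage $C$; its image sits inside one small set, so it cannot approximate a map like $v\circ f'$ whose image is an arbitrary compactum. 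Iterating the decomposition ($h=(u\circ h_0)*h'$, etc.) does not shrink anything: at every stage the residual map to be approximated (essentially $v^n\circ f'$) still has large image, so the step ``supplied by everywhere $\vC$-preuniversality applied to $(K,C)$ with a sufficiently small open target'' is not an available move. What is needed is a localization-and-patching argument: cover the target by small sets, invoke everywhere preuniversality once per piece, and glue the local maps without destroying the global condition $h^{-1}[X]=C\setminus B$ --- and your sketch contains none of the machinery that makes this gluing possible.

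Concretely, the paper (i) constructs a path-like map $\lambda(x,y,t)$ near the diagonal whose intermediate values lie in $X$ (Lemma~\ref{l:lambda}, via Arens--Eells and the homotopy $H$), so that blending a local preuniversality map with a map into $X$ keeps $X$-membership under control; (ii) builds $n$-ary operations $*_n$ and unary maps $w_1,\dots,w_n$ with $x=*_n(w_1(x),\dots,w_n(x))$ and $*_n(x_1,\dots,x_n)\in X$ iff all $x_i\in X$ (Lemma~\ref{l:operations}), so that detection of membership in $X$ can be delegated, at each point, to the one coordinate where a preuniversality map is used exactly; (iii) invokes Ostrand's theorem, hence needs finite-dimensionality, to color a fine cover with $m=1+\dim$ colors so that each blended map $\psi_i$ is well defined (Lemma~\ref{l:step}); (iv) since $K$ itself need not be finite-dimensional, first factors $f$ on $K\setminus B$ approximately through a locally compact, locally finite-dimensional space via a proper map (Lemma~\ref{l11}); and (v) handles the approach to $B$, where the approximation must become arbitrarily fine for $h$ to glue continuously with $f{\restriction}B$ while keeping $h^{-1}[X]\cap(K\setminus B)=C\setminus B$ up to the boundary, by a telescoping construction combining even and odd stages through $u$, $v$ and $*$ (Lemma~\ref{l10}). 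Your appeal to the ANR property to ``absorb the boundary data'' addresses only the extension of values, not the preimage condition near $B$. So while your reduction is a reasonable first page, the core of the proof --- the approximation of a non-constant map by one with prescribed $X$-preimage --- is missing.
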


The proof of Theorem~\ref{t:main} is long and technical. It will be presented in Section~\ref{s:main}. Now we will apply Theorem~\ref{t:main} to detecting strongly universal topological semilattices.

A {\em topological semilattice} is a topological space $X$ endowed with a commutative associative operation $*:X\times X\to X$ such that $x*x=x$ for all $x\in X$. A topological semilattice $X$ is called a {\em Lawson semilattice} if it has a base of the topology consisting of subsemilattices. Lawson semilattices often appear in the theory of hyperspaces, see \cite{KSY}, \cite{DR}, \cite{GM}, \cite{Krup}, \cite{KK}, \cite{KS07}, \cite{KS}.

A subset $A$ of a topological space $X$ is called {\em locally path connected in $X$} if for every point $x\in X$ and neighborhood $O_x\subseteq X$ of $x$ there exists a neighborhood $U_x\subseteq X$ of $x$ such that for any points $y,z\in A\cap U_x$ there exists a continuous map $\gamma:[0,1]\to A\cap O_x$ such that $\gamma(0)=y$ and $\gamma(1)=z$. A topological space $X$ is called {\em locally path connected} if $X$ is locally path connected in $X$.

The following helpful result was proved by Kubi\'s, Sakai and Yaguchi in \cite[5.1]{KSY}.

\begin{theorem}[Kubi\'s--Sakai--Yaguchi]\label{t:KSY} Let $M$ be a Lawson semilattice and $X$ be a dense locally path connected subsemilattice in $X$. Then $M$ is an ANR and $X$ is homotopy dense in $M$.
\end{theorem}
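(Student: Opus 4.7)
The plan is to exploit the Lawson base of open subsemilattices together with the density and local path connectedness of $X$, proving both conclusions via a single partition-of-unity-plus-semilattice-product construction. Fix a compatible metric on $M$ and a locally finite open cover $\{V_n\}_{n \in \w}$ of $M$ by open subsemilattices of arbitrarily small diameter (obtained from the Lawson base); by density of $X$ choose $x_n \in X \cap V_n$ for each $n$; and let $\{\varphi_n\}_{n \in \w}$ be a partition of unity subordinate to $\{V_n\}$. For the homotopy density, define $H : M \times [0,1] \to M$ by $H(x,0)=x$ and, for $t \in (0,1]$, by letting $H(x,t)$ be the finite $*$-product of those $x_n$ for which $\varphi_n(x) \ge \psi(t)$, where $\psi : (0,1] \to (0,1]$ is an increasing function with $\psi(t) \to 0$ as $t \to 0^+$. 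The value $H(x,t)$ lies in $X$ because every factor does and $X$ is a subsemilattice. Continuity at $t=0$ follows from the small-diameter condition: whenever $\varphi_n(x) > 0$ we have $x,x_n \in V_n$, so the whole product stays inside the small subsemilattice $V_n$ around $x$.

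For the ANR property of $M$, the same scheme locally contracts every open subsemilattice neighborhood onto a chosen point of $X$ through the analogous partition-of-unity $*$-product inside that neighborhood, so $M$ is locally contractible with the local equiconnection supplied by $*$. Since $M$ is metrizable, this suffices to conclude that $M$ is an ANR by a standard criterion; one may alternatively verify the Dugundji-type extension property directly by running the same partition-of-unity $*$-product construction on the extension problem for a map from a closed subspace of a metric space into $M$.

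The principal obstacle is the continuity of the semilattice product with respect to the partition-of-unity weights on $M \times (0,1]$, since at parameter values where some $\varphi_n(x)$ crosses the threshold $\psi(t)$ the active index set $\{n : \varphi_n(x) \ge \psi(t)\}$ changes discretely. Here the local path connectedness of $X$ in $M$ is essential: near such a crossing one replaces the abrupt inclusion or exclusion of the factor $x_n$ by a path in $X \cap V_n$ joining the two adjacent $*$-products, parametrized by the fractional position of $\varphi_n(x)$ relative to $\psi(t)$. Organizing this smoothing coherently over all crossings while simultaneously preserving $X$-valuedness and global continuity of $H$ is the technical core of the argument.
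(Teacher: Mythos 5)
The paper does not actually prove this statement; it is imported from Kubi\'s--Sakai--Yaguchi \cite{KSY}, so your sketch has to stand on its own, and as written it contains genuine gaps. First, with a single locally finite cover $\{V_n\}$ your homotopy cannot be continuous at $t=0$: since only finitely many $\varphi_n$ are positive at $x$, as $t\to 0^+$ the active index set $\{n:\varphi_n(x)\ge\psi(t)\}$ stabilizes at $\{n:\varphi_n(x)>0\}$, so $H(x,t)$ converges to the fixed product of the corresponding $x_n$'s, not to $x$. To get $H(x,t)\to x$ you need a sequence of covers with mesh tending to $0$, glued along the $t$-axis, which reintroduces exactly the index-crossing problem you defer. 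Second, the assertion that ``the whole product stays inside the small subsemilattice $V_n$ around $x$'' is false as stated: the active factors $x_{n_i}$ lie in \emph{different} members $V_{n_i}$ of the cover, and a product of elements taken from different subsemilattices need not lie in any one of them. The correct use of the Lawson property is via a star-refinement: start from a cover $\mathcal W$ by small open subsemilattices, refine it by a cover $\mathcal V$ whose stars lie in members of $\mathcal W$, and then all active factors (which lie in the star of $x$) sit in one common $W\in\mathcal W$, so the product does too. Third, the step you yourself call the technical core --- continuity across the threshold crossings --- is not carried out, and the proposed patch (a path in $X\cap V_n$ joining the two adjacent products) is not clearly available: those two products need not lie in $V_n$, nor in a region where the hypothesis of local path connectedness of $X$ in $M$ has been invoked. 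The standard argument instead realizes the nerve of a subsemilattice cover in $M$: vertices go to points $x_n\in X\cap V_n$, an edge $\{n,m\}$ is filled using idempotency and a local path $\gamma$ in $X$ from $x_n$ to $x_m$ (e.g.\ $s\mapsto x_n*\gamma(s)$ runs from $x_n*x_n=x_n$ to $x_n*x_m$), and higher simplices are filled by taking $*$-products of the edge fillings, all inside one small subsemilattice supplied by the star-refinement.

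The ANR half has a further error: local contractibility of a metrizable space does \emph{not} imply ANR (Borsuk's locally contractible compactum which is not an ANR), so ``locally contract onto a point of $X$ inside each basic subsemilattice, hence ANR by a standard criterion'' does not close the argument. What is needed is a uniform criterion such as the Lefschetz partial-realization property (or Dugundji's nerve criterion): for every open cover there is a refinement such that partial realizations of polyhedra extend to full realizations limited by the original cover. That is precisely what the nerve-realization construction sketched above delivers, with arbitrarily small simplices and, if desired, with values in $X$; homotopy density of $X$ then follows by approximating the identity of $M$ through such realizations. So your proposal assembles the right ingredients (Lawson base, density of $X$, local path connectedness, idempotency), but the two steps that carry the actual content --- coherent continuity of the product across changing index sets, and the passage from local data to the ANR property --- are respectively deferred and based on a false implication.
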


Applying Theorems~\ref{t:main} and \ref{t:KSY} to topological semilattices, we obtain the following useful characterization.

\begin{theorem}\label{t:Lawson} Let $M$ be a Lawson semilattice and $X$ be a dense coideal in $M$ such that $X$ is locally path connected in $M$. For a $2^\w$-stable $\mathbf \Pi^0_2$-hereditary weakly $\mathbf\Sigma^0_2$-additive class of spaces $\C$, the following conditions are equivalent:
\begin{enumerate}
\item the pair $(M,X)$ is strongly $(\mathbf \Pi^0_1\cap\C,\C)$-universal;
\item $(M,X)$ is strongly $(\mathbf \Pi^0_1\cap\C,\C)$-preuniversal;
\item $(M,X)$ is everywhere $(\mathbf \Pi^0_1\cap\C,\C)$-preuniversal.
\end{enumerate}
If the class $\C$ is compactification admitting and the space $X$ has SDAP, then the conditions \textup{(1)--(3)} are equivalent to:
\begin{enumerate}
\item[(4)] the space $X$ is strongly $\C$-universal.
\end{enumerate}
\end{theorem}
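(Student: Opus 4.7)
The plan is to assemble the equivalences as a chain of invocations of the previously established theorems. First I verify the background topological data. Since $X$ is a dense coideal of $M$, in particular $X$ is a submagma, i.e.\ a subsemilattice of $M$; combined with the hypothesis that $X$ is locally path connected in $M$, Theorem \ref{t:KSY} of Kubi\'s--Sakai--Yaguchi yields that $M$ is an ANR and $X$ is homotopy dense in $M$. This unlocks the applicability of Theorems \ref{t5}, \ref{t:main}, \ref{p1}, \ref{t:BC} to the pair $(M,X)$.

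Given this setup, the equivalence (1)$\Leftrightarrow$(2) is an immediate application of Theorem \ref{t5} (with the class $\C$ satisfying the stated descriptive-set-theoretic conditions). For (2)$\Leftrightarrow$(3), I apply Theorem \ref{t:main} to the class of compact pairs $\vC:=(\mathbf\Pi^0_1\cap\C,\C)$. The two required hypotheses are present: $M$ is idempotent (by the semilattice axiom $x\ast x=x$), hence decomposable via $u(x)=v(x)=x$, by the remark that idempotent magmas are decomposable; and $X$ is a coideal by assumption.

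For the final equivalence (1)$\Leftrightarrow$(4) under the two extra assumptions, I argue both directions separately. For (1)$\Rightarrow$(4): given any $C\in\C$, the compactification-admitting property of $\C$ produces a compact $K\in\C$ with $C\subseteq K$, so $(K,C)\in(\mathbf\Pi^0_1\cap\C,\C)$. Condition (1) then supplies strong $(K,C)$-universality of $(M,X)$, and Theorem \ref{p1} (whose SDAP hypothesis is now satisfied) concludes strong $C$-universality of $X$. For (4)$\Rightarrow$(1): since a weakly $\mathbf\Sigma^0_2$-additive class is in particular weakly $\mathbf\Sigma^0_1$-additive, Theorem \ref{t:BC} delivers strong $(\mathbf\Pi^0_1\cap\C,\C)$-universality of $(M,X)$ directly from strong $\C$-universality of $X$.

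The only delicate point in this blueprint is the Polish-ANR hypothesis of Theorem \ref{t:BC}: the present $M$ is assumed only to be a metrizable separable ANR, not Polish. I expect to handle this either by verifying that under the additional SDAP hypothesis on $X$ one may embed $M$ as a homotopy dense subspace of a Polish ANR $\widetilde M$ in which $X$ is still homotopy dense, apply Theorem \ref{t:BC} to $(\widetilde M,X)$, and then transfer the strong $(\mathbf\Pi^0_1\cap\C,\C)$-universality back to $(M,X)$ (using that $Z$-sets and homotopy density are preserved under open subspaces); or by noting that for the pair $(M,X)$ the relevant universality is intrinsic enough that the Polish overspace can be circumvented. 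All other steps are essentially bookkeeping applications of the stated machinery.
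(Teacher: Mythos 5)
Your proposal is correct and follows essentially the same route as the paper's own proof: Theorem~\ref{t:KSY} yields that $M$ is an ANR with $X$ homotopy dense, Theorem~\ref{t:main} with $u=v=\mathrm{id}$ (using idempotency of the semilattice and the coideal hypothesis) gives (2)$\Leftrightarrow$(3), Theorem~\ref{t5} gives (1)$\Leftrightarrow$(2), and Theorems~\ref{p1} and \ref{t:BC} (with the compactification-admitting property supplying the pair $(K,C)$) give (1)$\Leftrightarrow$(4). The one ``delicate point'' you flag --- that Theorem~\ref{t:BC} is stated for a \emph{Polish} ANR $M$ while Theorem~\ref{t:Lawson} does not assume completeness --- is not addressed in the paper either, which simply cites Theorem~\ref{t:BC}; so your explicit acknowledgment of this mismatch, together with the proposed completion-and-transfer remedy, leaves your argument no less complete than the paper's own.
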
 

\begin{proof} Since the subsemilattice $X$ is locally path connected in $M$, we can apply Theorem~\ref{t:KSY} and conclude that the Lawson semilattice $M$ is an ANR and its dense subsemilattice $X$ is homotopy dense in $M$. Applying Theorem~\ref{t:main} (with $u,v$ being identity maps of $M$), we conclude that the conditions (2) and (3) are equivalent. The conditions (1) and (2) are equivalent by Theorem~\ref{t5}. If the class $\C$ is compactification admitting and the space $X$ has SDAP, then the equivalence $(1)\Leftrightarrow(4)$ follows from Theorems~\ref{p1} and \ref{t:BC}.
\end{proof}

\begin{remark} Theorem~\ref{t:Lawson} has been applied in \cite{BKO} to studying the topological structure of some hyperspaces. For more applications of strongly universal and absorbing spaces in the theory of hyperspaces, see \cite{BanMaz}, \cite{Cauty}, \cite{DR}, \cite{GM}, \cite{Krup}, \cite{KK}, \cite{KS07}, \cite{KS}.
\end{remark}

\section{Proof of Theorem~\ref{t:main}}\label{s:main}

Let $M$ be an ANR space carrying a continuous binary operation $*:M\times M\to M$ and two continuous unary operations $u,v:M\to M$ such that $u(x)*v(x)=x$ for all $x\in M$. Let $X$ be a homotopy dense subset of $M$ such that $X*X\subseteq X$ and $(M*(M\setminus X))\cup((M\setminus X)*M)\subseteq M\setminus X$. Two latter inclusions are equivalent to the equivalence
$$x*y\in X\;\Leftrightarrow\;x,y\in X$$holding for any elements $x,y\in M$.

Given any class $\vC$ of compact pairs, we should prove that the pair $(M,X)$ is strongly $\vC$-preuniversal if it is everywhere $\vC$-preuniversal (the opposite implication being obvious). So, we assume that the pair $(M,X)$ is everywhere $\vC$-preuniversal.

Fix a metric $d$ generating the topology of the space $M$. For any $x\in M$ and positive real number $\e$ denote by $B(x,\e)$ the open $\e$-ball $\{y\in M:d(y,x)<\e\}$ centered at $x$.

\begin{lemma}\label{l1} For every open cover $\U$ of $M$ and every compact subset $K\subseteq M$ there exists $\e>0$ such that for every $x\in K$ there exists a set $U\in\U$ such that $B(x,\e)\subseteq U$.
\end{lemma}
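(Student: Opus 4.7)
The statement is essentially a Lebesgue-number-type result, adapted so that the balls live in the ambient space $M$ while only their centres are restricted to the compact set $K$. The standard Lebesgue number lemma is stated for covers of a compact space, but the proof adapts almost verbatim; the only subtlety is that $M$ itself need not be compact, so we must use compactness of $K$ only to extract a finite subcover.

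The plan is as follows. First, for every point $x\in K$ pick a member $U_x\in\U$ with $x\in U_x$; since $U_x$ is open in $M$ there is some $r_x>0$ with $B(x,2r_x)\subseteq U_x$. The balls $\{B(x,r_x):x\in K\}$ form an open cover of the compact set $K$, so by compactness we can extract a finite subcover $\{B(x_1,r_{x_1}),\dots,B(x_n,r_{x_n})\}$. Set
\[
\e=\min\{r_{x_1},\dots,r_{x_n}\}>0.
\]

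Now given any $x\in K$, choose $i\le n$ with $x\in B(x_i,r_{x_i})$, i.e.\ $d(x,x_i)<r_{x_i}$. For any $y\in B(x,\e)$ the triangle inequality gives
\[
d(y,x_i)\le d(y,x)+d(x,x_i)<\e+r_{x_i}\le 2r_{x_i},
\]
so $y\in B(x_i,2r_{x_i})\subseteq U_{x_i}$. Therefore $B(x,\e)\subseteq U_{x_i}\in\U$, which is what we needed.

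There is no real obstacle here: the argument is the textbook Lebesgue-number proof, with the sole twist being that we shrink each chosen radius by a factor of two at the start so that the finite subcover's centres can absorb a uniform $\e$-perturbation. No property of ANRs, homotopy density, or the magma structure is used, so the lemma is purely metric and will serve later only as a uniform refinement tool for the cover $\U$ along compacta.
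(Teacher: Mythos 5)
Your proof is correct and follows exactly the same argument as the paper: choose radii $r_x$ with $B(x,2r_x)\subseteq U_x$, extract a finite subcover of $K$ by the half-radius balls, take $\e$ to be the minimum radius, and conclude via the triangle inequality. Nothing further is needed.
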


\begin{proof} For every $x\in K$ we can find an open set $U_x\in\U$ containing $x$ and a positive real number $\e_x$ such that $B(x,2\e_x)\subseteq U_x$. By the compactness of $K$, there exists a finite set $C\subseteq K$ such that $K\subseteq\bigcup_{c\in C}B(c,\e_c)$. We claim that the number $\e=\min\{\e_c:c\in C\}>0$ has the required property. Indeed, for any $x\in K$ we can find $c\in C$ such that $x\in B(c,\e_c)$. Then $B(x,\e)\subseteq B(c,\e_c+\e)\subseteq B(c,2\e_c)\subseteq U_c\in\U$.
\end{proof}

\begin{lemma}\label{l:lambda1} For any open set $W\subseteq M\times [0,1]$ containing the set $M\times\{0\}$, there exists a continuous function $\lambda:M\to(0,1]$ such that
$$\{(x,t)\in M\times[0,1]:0\le t\le\lambda(x)\}\subseteq W.$$
\end{lemma}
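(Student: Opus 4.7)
The plan is to exploit paracompactness of the metric space $M$ to patch local ``tube heights'' together via a partition of unity, producing $\lambda$ as a continuous convex combination.

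First, for each $x \in M$ the point $(x, 0)$ lies in the open set $W$, so I can pick an open neighborhood $U_x \ni x$ in $M$ and a number $t_x \in (0,1]$ with $U_x \times [0, t_x] \subseteq W$. Because $M$ is metrizable, and hence paracompact, the cover $\{U_x\}_{x \in M}$ admits a locally finite open refinement $\{V_\alpha\}_{\alpha \in A}$ with $V_\alpha \subseteq U_{x_\alpha}$ for a suitable choice of indices $x_\alpha$, together with a subordinate partition of unity $\{\phi_\alpha\}_{\alpha \in A}$ satisfying $\supp\phi_\alpha \subseteq V_\alpha$. Setting $t_\alpha := t_{x_\alpha}$, I would then define
\[\lambda(x) := \sum_{\alpha \in A} \phi_\alpha(x)\, t_\alpha.\]
Local finiteness of the partition makes this sum continuous, and since the $\phi_\alpha(x)$ sum to $1$ and each $t_\alpha \in (0,1]$, the value $\lambda(x)$ is a convex combination of numbers in $(0,1]$, so $\lambda(x) \in (0,1]$ as required.

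The key verification is the inclusion $\{x\} \times [0,\lambda(x)] \subseteq W$. The crucial observation is that $\lambda(x)$, being a convex combination, is bounded above by the largest $t_\alpha$ with $\phi_\alpha(x) > 0$. Concretely, let $J(x) := \{\alpha \in A : \phi_\alpha(x) > 0\}$, which is finite by local finiteness, and pick $\alpha^* \in J(x)$ maximizing $t_\alpha$; then $\lambda(x) \le t_{\alpha^*}$, while $x \in \supp\phi_{\alpha^*} \subseteq V_{\alpha^*} \subseteq U_{x_{\alpha^*}}$. Consequently $\{x\} \times [0,\lambda(x)] \subseteq U_{x_{\alpha^*}} \times [0, t_{\alpha^*}] \subseteq W$, which is precisely what is needed.

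There is no real obstacle here; this is a standard paracompactness exercise. The one subtlety worth flagging is that the naive definition $\lambda(x) := \min\{t_\alpha : \alpha \in J(x)\}$ is only upper semicontinuous in general, so it is essential to use the convex combination (rather than a pointwise minimum) in order to guarantee continuity of $\lambda$ while still keeping it below the pointwise ``safe'' bound.
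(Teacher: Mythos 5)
Your proof is correct and follows essentially the same route as the paper: choose local tube heights $U_x\times[0,t_x]\subseteq W$, take a locally finite refinement with a subordinate partition of unity, and define $\lambda$ as the resulting convex combination. Your explicit verification (bounding $\lambda(x)$ by the largest $t_\alpha$ with $\phi_\alpha(x)>0$) is a correct spelling-out of the step the paper leaves as an observation.
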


\begin{proof} For every $x\in M$, find an open neighborhood $W_x\subseteq M$ of $x$ and a positive real number $\e_x$ such that $W_x\times[0,\e_x]\subseteq W$. By the paracompactness of the metrizable space $M$, there exists a locally finite open cover $\U$ of $M$ such that each set $U\in\U$ is contained in some set $W_x$. So, for some $\e_U>0$ (for example, $\e_U=\e_x$), we have $U\times [0,\e_U]\subseteq W$. By the paracompactness of $M$ there exists a family of continuous functions $(\lambda_U:M\to[0,1])_{U\in\U}$ such that 
\begin{itemize}
\item $\supp(\lambda_U)=\lambda_U^{-1}\big((0,1]\big)\subseteq U$ for every $U\in\U$ and
\item $\sum_{U\in\U}\lambda_U(x)=1$ for every $x\in M$.
\end{itemize}
Finally, consider the function $$\lambda:M\to(0,1],\quad\lambda:x\mapsto\sum_{U\in\U}\lambda_U(x)\cdot\e_U,$$and observe that $\lambda$ is well-defined, continuous,  and has the required property:
$$\{(x,t)\in M\times[0,1]:t\le \lambda(x)\}\subseteq W.$$
\end{proof}

\begin{lemma}\label{l:Hom} There exists a homotopy $H:M\times[0,1]\to M$ such that for every $x\in M$ and $t\in(0,1]$ the following conditions are satisfied:
\begin{enumerate}
\item $H(x,0)=x$;
\item $H(x,t)\in X$;
\item $d(x,H(x,t))< t$.
\end{enumerate}
\end{lemma}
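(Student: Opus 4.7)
The plan is to start from the witness of homotopy density and reparametrize it in the time variable so that the motion near $t=0$ is slow enough to guarantee $d(x,H(x,t))<t$.

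First, by homotopy density of $X$ in $M$, fix a continuous $H':M\times[0,1]\to M$ with $H'(x,0)=x$ and $H'(x,s)\in X$ for all $s\in(0,1]$. Conditions (1) and (2) are already delivered by $H'$; only condition (3), the metric control, fails in general. To repair it, for every integer $n\ge 1$ consider the set
$$W_n=\{(x,s)\in M\times[0,1]:d(x,H'(x,s))<\tfrac{1}{n+1}\}.$$
Continuity of $H'$ and of $d$ makes $W_n$ open, and $H'(x,0)=x$ shows $M\times\{0\}\subseteq W_n$. Applying Lemma~\ref{l:lambda1} to $W_n$, I obtain a continuous $\lambda_n:M\to(0,1]$ with $\{(x,s):s\le\lambda_n(x)\}\subseteq W_n$. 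Replacing each $\lambda_n$ by $\min\{1/n,\lambda_1,\dots,\lambda_n\}$, I may assume $\lambda_{n+1}\le\lambda_n\le 1/n$ everywhere on $M$, so $\lambda_n(x)\to 0$ uniformly.

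Next I build a continuous rescaling $\sigma:M\times[0,1]\to[0,1]$ with $\sigma(x,0)=0$ and $\sigma(x,t)>0$ for $t>0$, by piecewise-linearly interpolating the values $\lambda_n(x)$ between the nodes $t=1/n$. Concretely, for $t\in[1/(n+1),1/n]$ define $\sigma(x,t)$ as the affine combination that takes the value $\lambda_{n+1}(x)$ at $t=1/(n+1)$ and $\lambda_n(x)$ at $t=1/n$; and set $\sigma(x,0)=0$. Continuity of each $\lambda_n$ gives continuity on $M\times(0,1]$, the nodes match up so there is no jump across the seams, and the bound $\sigma(x,t)\le\lambda_n(x)\le 1/n$ for $t\le 1/n$ forces continuity at $t=0$.

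Finally, set $H(x,t):=H'(x,\sigma(x,t))$. Then $H(x,0)=H'(x,0)=x$, giving (1); for $t>0$ we have $\sigma(x,t)>0$, hence $H(x,t)\in X$, giving (2); and for $t\in(1/(n+1),1/n]$ we have $\sigma(x,t)\le\lambda_n(x)$, so $(x,\sigma(x,t))\in W_n$, which yields $d(x,H(x,t))<1/(n+1)<t$, establishing (3). I do not anticipate any real obstacle here; the only thing one must be careful about is arranging $\lambda_n\to 0$ so that the interpolated $\sigma$ extends continuously across $t=0$, and this is handled by the truncation $\lambda_n\le 1/n$ made above.
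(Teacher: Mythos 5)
Your proposal is correct and follows essentially the same route as the paper: both take the homotopy witnessing homotopy density, form the open sets $W_n$ of points where the displacement is small, apply Lemma~\ref{l:lambda1} to get functions $\lambda_n$ bounded above by a sequence tending to $0$, piecewise-linearly interpolate them into a time-rescaling that vanishes only at $t=0$, and compose. The only differences are cosmetic (nodes $1/n$ versus $2^{-n}$, weak versus strict monotonicity of the $\lambda_n$), and your verification of the metric estimate $d(x,H(x,t))<t$ is sound.
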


\begin{proof} Since $X$ is homotopy dense in $M$, there exists a homotopy $h:M\times [0,1]\to M$ such that $h(x,0)=x$ and $h(x,t)\in X$ for all $x\in M$ and $t\in(0,1]$. For every $n\in\w$ consider the open subset $$W_n=\{(x,t)\in M\times[0,1]:h(x,t) \in B(x,2^{-n})\}$$of the product $M\times[0,1]$, and observe that $M\times\{0\}\subseteq W_n$. Using Lemma~\ref{l:lambda1}, we can construct a sequence of continuous functions $(\lambda_n:M\to(0,1])_{n\in\w}$ such that for every $n\in\w$ the following conditions are satisfied:
\begin{itemize}
\item $\{(x,t)\in M\times[0,1]:t\le\lambda_n(x)\}\subseteq W_{n+1}\cap(M\times[0,2^{-n}])$;
\item if $n>0$, then $\lambda_n(x)<\lambda_{n-1}(x)$ for all $x\in M$.
\end{itemize}

Let $\xi:M\times[0,1]\to [0,1]$ be the continuous map defined by the formula
$$\xi(x,t)=\begin{cases}
\lambda_{n}(x)+\big(\lambda_{n-1}(x)-\lambda_{n}(x)\big)(2^{n}t-1)&\mbox{if $\frac1{2^{n}}\le t\le \frac1{2^{n-1}}$ for some $n\in\IN$};\\
0&\mbox{otherwise}.
\end{cases}
$$
We claim that the continuous map
$$H:M\times[0,1]\to M,\quad H:(x,t)\mapsto h(x,\xi(x,t)),$$
has the required properties. Fix any $x\in M$ and $t\in(0,1]$. Then $\xi(x,t)\in(0,1]$ and hence  $H(x,t)=h(x,\xi(x,t))\in X$ and $H(x,0)=h(x,\xi(x,0))=h(x,0)=x$.  Find a unique $n\in\IN$ such that $\frac1{2^n}<t\le\frac1{2^{n-1}}$. The definition of the map $\xi$ ensures that $\{x\}\times \xi(x,t)\in \{x\}\times [\lambda_n(x),\lambda_{n-1}(x)]\subseteq W_n$ and hence $$d(x,H(x,t))=d(x,h(x,\xi(x,t)))<2^{-n}<t.$$
\end{proof}

From now on, we fix a homotopy $H:M\times [0,1]\to M$ satisfying the conditions (1)--(3) of Lemma~\ref{l:Hom}.

\begin{lemma}\label{l:lambda} There exist an open neighborhood $W_\lambda\subseteq M\times M$ of the diagonal and a continuous map $\lambda:W_\lambda\times [0,1]\to M$ such that for every $(x,y)\in W_\lambda$, the following conditions are satisfied:
\begin{enumerate}
\item $\lambda(x,y,0)=x$ and $\lambda(x,y,1)=y$;
\item $\lambda(x,x,t)=x$ for every $t\in[0,1]$;
\item if $x\ne y$, then $\lambda(x,y,t)\in X$ for every $t\in(0,1)$.
\end{enumerate} 
\end{lemma}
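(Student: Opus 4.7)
The strategy is to combine a local ``equi-connecting'' structure on the ANR $M$ (a continuous choice of paths joining nearby points) with the homotopy $H$ from Lemma~\ref{l:Hom} that pushes points into $X$. The tension is that condition~(2) forces $\lambda$ to be constant on the diagonal while condition~(3) forces it to leave the diagonal into $X$; this will be reconciled by modulating the strength of the push by the metric distance $d(x,y)$, which vanishes precisely on the diagonal.

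First I would construct an equi-connecting map on $M$: a continuous $\mu:W_\mu\times[0,1]\to M$ defined on some open neighborhood $W_\mu$ of the diagonal $\Delta\subseteq M\times M$ with $\mu(x,y,0)=x$, $\mu(x,y,1)=y$, and $\mu(x,x,t)=x$. This is a standard consequence of $M$ being an ANR: the closed subspace $A=(M\times M\times\{0,1\})\cup(\Delta\times[0,1])$ of $M\times M\times[0,1]$ carries the obvious continuous map $f:A\to M$ sending $(x,y,0)\mapsto x$, $(x,y,1)\mapsto y$, $(x,x,t)\mapsto x$, which extends to a continuous map $\tilde f:U\to M$ on some open neighborhood $U$ of $A$. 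For each $x\in M$ the compact segment $\{(x,x)\}\times[0,1]$ is contained in $U$, so the tube lemma supplies an open $V_x\ni(x,x)$ with $V_x\times[0,1]\subseteq U$; setting $W_\mu:=\bigcup_{x\in M}V_x$ and $\mu:=\tilde f{\restriction}(W_\mu\times[0,1])$ yields the desired equi-connecting map.

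Next I would pick any continuous function $\phi:[0,1]\to[0,1]$ with $\phi(0)=\phi(1)=0$ and $\phi(t)>0$ for $t\in(0,1)$ (say $\phi(t)=t(1-t)$), put $W_\lambda:=W_\mu$, and define
$$\lambda(x,y,t):=H\bigl(\mu(x,y,t),\,\phi(t)\cdot d(x,y)\bigr).$$
Continuity of $\lambda$ is immediate from continuity of $H$, $\mu$, $\phi$ and $d$. Condition~(1) follows from $\phi(0)=\phi(1)=0$ together with $H(z,0)=z$, condition~(2) follows from $d(x,x)=0$ which yields $\lambda(x,x,t)=H(\mu(x,x,t),0)=H(x,0)=x$, and condition~(3) follows from the observation that $\phi(t)\cdot d(x,y)>0$ whenever $x\ne y$ and $t\in(0,1)$, combined with Lemma~\ref{l:Hom}(2), which guarantees $H(z,s)\in X$ for every $s>0$.

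The main obstacle is the construction of $\mu$, i.e.\ producing an equi-connecting structure on a full ``tube'' $W_\mu\times[0,1]$ above a neighborhood of the diagonal; the ANR extension theorem together with the compactness of $[0,1]$ via the tube lemma handles it, but this is the only genuinely topological step. Once $\mu$ is available, the formula for $\lambda$ is essentially forced by the requirement that the push into $X$ vanish on the diagonal, and the remaining verifications are routine.
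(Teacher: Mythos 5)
Your proposal is correct and reaches the same final shape as the paper's construction (compose a local path-connection near the diagonal with the homotopy $H$, damped by a factor that vanishes at $t\in\{0,1\}$ and on the diagonal), but it produces the connection structure by a genuinely different route. The paper invokes the Arens--Eells theorem to embed $(M,d)$ as a closed subset of a normed space $L$, takes a retraction $r:W\to M$ of a neighborhood of $M$ in $L$, and connects $x$ to $y$ by retracting the straight segment, setting $\lambda(x,y,t)=H\bigl(r((1-t)x+ty),\min\{t,1-t,d(x,y)\}\bigr)$ on $W_\lambda=\{(x,y):[x,y]\subseteq W\}$. You instead build the equi-connecting map $\mu$ intrinsically: extend the obvious map on the closed set $(M\times M\times\{0,1\})\cup(\Delta\times[0,1])$ over a neighborhood in $M\times M\times[0,1]$ using the neighborhood-extension characterization of ANRs, and then cut out a tube $W_\mu\times[0,1]$ via the tube lemma. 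Your approach avoids the ambient linear structure entirely (and is arguably cleaner, since it never needs the segment $[x,y]$ to stay inside the retraction's domain), while the paper's approach gets the equi-connecting data ``for free'' from convexity of $L$ at the cost of the embedding and retraction. One small repair is needed in your formula: the second argument of $H$ must lie in $[0,1]$, but $\phi(t)\cdot d(x,y)$ can exceed $1$ when $d$ is unbounded; replace it by $\min\{\phi(t)\,d(x,y),1\}$ (or replace $d$ by the equivalent bounded metric $\min\{d,1\}$), which changes none of the verifications of (1)--(3). Note that the paper's damping factor $\min\{t,1-t,d(x,y)\}$ avoids this issue automatically since $t\le 1$.
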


\begin{proof}  By Arens--Eells Embedding Theorem 1.13.2 \cite{Sak2}, the metric space $(M,d)$ can be identified with a closed subspace of some normed space $L$. Since $X$ is an ANR, there exists a retraction $r:W\to X$ of some open neighborhood $W$ of $X$ in $L$. Consider the open set $$W_\lambda=\{(x,y)\in M\times M:[x,y]\subseteq W\}$$where $[x,y]=\{tx+(1-t)y:t\in[0,1]\}$ is the closed segment connecting the points $x,y\in M\subseteq L$ in the linear space $L$. 

We claim that the continuous map $$\lambda:W_\lambda\times [0,1]\to M,\quad\lambda:((x,y),t)\mapsto H(r((1-t)x+ty),\min\{t,1-t,d(x,y)\}),$$has the required properties.
Indeed, for any $(x,y)\in W_\lambda$ and $t\in [0,1]$  we have 
$$\begin{aligned}
&\lambda(x,y,0)=H(r(x),\min\{0,1,d(x,y)\})=H(r(x),0)=r(x)=x;\\
&\lambda(x,y,1)=H(r(y),\min\{1,0,d(x,y)\})=H(r(y),0)=r(y)=y;\\
&\lambda(x,x,t)=H(r((1-t)x+tx),\min\{t,1-t,d(x,x)\})=H(r(x),0)=r(x)=x.
\end{aligned}
$$
If $x\ne y$ and $t\in(0,1)$, then $\min\{t,1-t,d(x,y)\}>0$ and hence
$$\lambda(x,y,t)=H(r((1-t)x+ty),\min\{t,1-t,d(x,y)\})\in X$$by Lemma~\ref{l:Hom}(2).\end{proof}

From now on, we fix an open neighborhood $W_\lambda\subseteq M\times M$ of the diagonal and a continuous map $\lambda:W_\lambda\times[0,1]\to M$ satisfying the conditions (1)--(3) of Lemma~\ref{l:lambda}.

\begin{lemma}\label{l:lambda2} For every compact set $K\subseteq M$ and any $\e>0$ there exists $\delta>0$ such that for every $x\in K$ we have
$$B(x,\delta)\times B(x,\delta)\subseteq W_\lambda\quad\mbox{and}\quad \lambda\big[B(x,\delta)\times B(x,\delta)\times[0,1]\big]\subseteq B(x,\tfrac12\e).$$
\end{lemma}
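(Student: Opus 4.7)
The plan is to carry out a standard two-stage compactness argument: first establish the claim locally at each point $x \in K$ using continuity of $\lambda$ and the fact that $\lambda(x,x,t) = x$ for all $t$, and then globalize over $K$ using a finite subcover, in the same spirit as Lemma~\ref{l1}.

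First, I would fix $x \in K$ and argue as follows. Since $W_\lambda$ is an open neighborhood of the diagonal in $M \times M$, there is some $\delta_x^{(1)} > 0$ with $B(x,2\delta_x^{(1)}) \times B(x,2\delta_x^{(1)}) \subseteq W_\lambda$. For the mapping condition, recall that $\lambda(x,x,t) = x$ for every $t \in [0,1]$ by Lemma~\ref{l:lambda}(2). Using continuity of $\lambda$ at each point $(x,x,t)$, for every $t \in [0,1]$ there is an open neighborhood $V_t \times V_t \times I_t$ of $(x,x,t)$ inside $W_\lambda \times [0,1]$ whose $\lambda$-image lies in $B(x,\tfrac14\e)$. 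By compactness of $[0,1]$, finitely many of the intervals $I_t$ cover $[0,1]$; intersecting the corresponding $V_t$'s produces a common radius $\delta_x^{(2)} > 0$ such that $\lambda\bigl[B(x,2\delta_x^{(2)}) \times B(x,2\delta_x^{(2)}) \times [0,1]\bigr] \subseteq B(x,\tfrac14\e)$. Now set $\delta_x = \min\{\delta_x^{(1)},\delta_x^{(2)},\tfrac14\e\}$, which packages all three properties together.

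Next I would apply compactness of $K$ to extract a finite set $F \subseteq K$ with $K \subseteq \bigcup_{c \in F} B(c,\delta_c)$, and define $\delta = \min_{c \in F}\delta_c > 0$. For an arbitrary $x \in K$, choose $c \in F$ with $d(x,c) < \delta_c$. The triangle inequality together with $\delta \le \delta_c$ gives $B(x,\delta) \subseteq B(c,2\delta_c)$, so
\[ B(x,\delta) \times B(x,\delta) \subseteq B(c,2\delta_c) \times B(c,2\delta_c) \subseteq W_\lambda, \]
which verifies the first required inclusion. For the second,
\[ \lambda\bigl[B(x,\delta) \times B(x,\delta) \times [0,1]\bigr] \subseteq \lambda\bigl[B(c,2\delta_c) \times B(c,2\delta_c) \times [0,1]\bigr] \subseteq B(c,\tfrac14\e), \]
and since $d(c,x) < \delta_c \le \tfrac14\e$, another application of the triangle inequality gives $B(c,\tfrac14\e) \subseteq B(x,\tfrac14\e + \delta_c) \subseteq B(x,\tfrac12\e)$, as required.

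There is no genuine obstacle here: the statement is a uniform-continuity-on-compacta fact, and the only mild subtlety is making sure the constants combine correctly so that $B(c,\tfrac14\e)$ ends up inside $B(x,\tfrac12\e)$, which is why one imposes the extra bound $\delta_x \le \tfrac14\e$ at the local stage. The doubling trick (using $2\delta_x$ balls locally and $\delta \le \delta_c$ globally) is exactly the same device used in the proof of Lemma~\ref{l1}, and it transfers verbatim to the present setting.
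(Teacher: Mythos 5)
Your proof is correct and follows essentially the same route as the paper's: a local choice of $\delta_x$ at each point using continuity of $\lambda$ and compactness of $[0,1]$, followed by a finite subcover of $K$ and the same doubling trick as in Lemma~\ref{l1}. If anything, your bookkeeping with $\tfrac14\e$ is slightly tidier, since the paper's own estimate ends in $B(x,\delta_c+\tfrac12\e)\subseteq B(x,\e)$ rather than the stated $B(x,\tfrac12\e)$.
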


\begin{proof} For every $x\in K$ we have $\lambda\big[\{x\}\times\{x\}\times[0,1]\big]=\{x\}\subseteq B(x,\frac12\e)$. By the compactness of the interval $[0,1]$ and the continuity of $\lambda$, there exists a positive $\delta_x<\frac12\e$ such that $$\lambda\big[B(x,2\delta_x)\times B(x,2\delta_x)\times[0,1]\big]\subseteq B(x,\tfrac12\e).$$ By the compactness of $K$, there exists a finite set $C\subseteq K$ such that $K\subseteq\bigcup_{c\in C}B(c,\delta_c)$. We claim that the number $\delta=\min\{\delta_c:c\in C\}>0$ has the required property. Indeed, for any $x\in K$ we can find $c\in C$ such that $x\in B(c,\delta_c)$. Then for every $y,z\in B(x,\delta)$ we have $y,z\in B(x,\delta)\subseteq B(B(c,\delta_c),\delta)\subseteq B(c,\delta_c+\delta)\subseteq B(c,2\delta_c)$ and hence $$\lambda(y,z,t)\in \lambda\big[B(c,2\delta_c)\times B(c,2\delta_c)\times[0,1]\big]\subseteq B(c,\tfrac12\e)\subseteq B(x,\delta_c+\tfrac12\e)\subseteq B(x,\e).$$ 
\end{proof}

We recall that the space $M$ carries a continuous binary operation $*:M\times M\to M$ and two continuous unary operations $u,v:M\to M$ such that $u(x)*v(x)=x$ for all $x\in M$. By analogy with Lemma~\ref{l:lambda2} the following lemma can be proved.

\begin{lemma} For every compact set $K\subseteq M$ and every $\e>0$, there exists $\delta>0$ such that for every $x\in K$ $$B(u(x),\delta)*B(v(x),\delta)\subseteq B(x,\e).$$
\end{lemma}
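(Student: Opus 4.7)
The plan is to mimic the proof of Lemma~\ref{l:lambda2}, with the continuity of the binary operation $*$ at the point $(u(x),v(x))$ playing the role previously played by the continuity of $\lambda$. The only new feature is that the balls appearing in the conclusion are centered at $u(x)$ and $v(x)$, not at $x$ itself, so I will additionally invoke the continuity of the unary operations $u,v$ to translate a neighborhood of a point $c\in K$ into neighborhoods of $u(c)$ and $v(c)$.

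First, for each $c\in K$, the identity $u(c)*v(c)=c$ together with the continuity of $*$ yields some $\delta_c>0$ such that
$$B(u(c),2\delta_c)*B(v(c),2\delta_c)\subseteq B(c,\tfrac12\e).$$
Using the continuity of $u$ and $v$ at $c$, choose a positive $\eta_c\le\tfrac12\e$ so small that $u[B(c,\eta_c)]\subseteq B(u(c),\delta_c)$ and $v[B(c,\eta_c)]\subseteq B(v(c),\delta_c)$. By the compactness of $K$, extract a finite set $C\subseteq K$ with $K\subseteq\bigcup_{c\in C}B(c,\eta_c)$, and set $\delta=\min\{\delta_c:c\in C\}>0$.

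To verify that this $\delta$ works, fix $x\in K$ and pick $c\in C$ with $x\in B(c,\eta_c)$. Then $u(x)\in B(u(c),\delta_c)$ and $v(x)\in B(v(c),\delta_c)$, so
$$B(u(x),\delta)\subseteq B(u(c),\delta_c+\delta)\subseteq B(u(c),2\delta_c),$$
and similarly $B(v(x),\delta)\subseteq B(v(c),2\delta_c)$. Combining these inclusions with the choice of $\delta_c$ yields
$$B(u(x),\delta)*B(v(x),\delta)\subseteq B(u(c),2\delta_c)*B(v(c),2\delta_c)\subseteq B(c,\tfrac12\e)\subseteq B(x,\e),$$
where the final inclusion uses $d(x,c)<\eta_c\le\tfrac12\e$.

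The argument is entirely routine and follows the template of Lemma~\ref{l:lambda2} essentially line for line; the only point requiring a little care is to shrink $\eta_c$ below $\tfrac12\e$ so that the ball $B(c,\tfrac12\e)$ is absorbed into $B(x,\e)$. I expect no substantive obstacle.
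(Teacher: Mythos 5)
Your proof is correct and follows exactly the route the paper intends: the paper leaves this lemma unproved, remarking only that it goes ``by analogy with Lemma~\ref{l:lambda2}'', and your argument is precisely that analogy, with the one genuinely needed extra step (using continuity of $u$ and $v$ to pass from a neighborhood of $c$ to neighborhoods of $u(c)$ and $v(c)$) handled correctly. The choice $\eta_c\le\tfrac12\e$ indeed closes the final inclusion $B(c,\tfrac12\e)\subseteq B(x,\e)$, so there is nothing to fix.
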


\begin{lemma}\label{l:operations} For every $n\in\IN$ there exist a continuous $n$-ary operation $*_n:M^n\to M$ and continuous unary operations $w_1,\dots,w_n:M\to M$ such that
\begin{enumerate}
\item $x=*_n(w_1(x),\dots,w_n(x))$ for every $x\in M$;
\item for every $x_1,\dots,x_n\in M$ we have $*_n(x_1,\dots,x_n)\in X$ if and only if $x_1,\dots,x_n\in X$.
\end{enumerate}
\end{lemma}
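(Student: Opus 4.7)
The plan is to prove Lemma~\ref{l:operations} by induction on $n$, using the binary operation $*$ together with the unary operations $u, v$ as the base building block.

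For the base case $n = 1$, I would simply let $*_1 = \mathrm{id}_M$ and $w_1 = \mathrm{id}_M$, so that both conditions hold trivially. The case $n = 2$ is itself just the hypothesis of the theorem: take $*_2 = *$, $w_1 = u$, $w_2 = v$; then (1) is the decomposability equation $u(x) * v(x) = x$, and (2) is the coideal property of $X$ in $M$ as reformulated at the beginning of this section.

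For the inductive step, suppose continuous maps $*_n : M^n \to M$ and $w_1, \dots, w_n : M \to M$ have already been constructed with properties (1) and (2). I would define the continuous $(n{+}1)$-ary operation
\[
*_{n+1}(x_1, \dots, x_{n+1}) \;=\; *_n\bigl(x_1, \dots, x_{n-1},\, x_n * x_{n+1}\bigr),
\]
together with the unary operations $w_i' = w_i$ for $i < n$, $w_n' = u \circ w_n$, and $w_{n+1}' = v \circ w_n$. All of these are continuous as compositions of continuous maps. For condition (1), compute
\[
*_{n+1}\bigl(w_1'(x), \dots, w_{n+1}'(x)\bigr) = *_n\bigl(w_1(x),\dots, w_{n-1}(x),\, u(w_n(x)) * v(w_n(x))\bigr) = *_n(w_1(x),\dots,w_n(x)) = x,
\]
using the decomposability identity $u(y)*v(y)=y$ applied to $y = w_n(x)$, and then the inductive hypothesis. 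For condition (2), chain the equivalences: by the inductive hypothesis applied to $*_n$, the element $*_{n+1}(x_1,\dots,x_{n+1}) = *_n(x_1,\dots,x_{n-1}, x_n*x_{n+1})$ lies in $X$ if and only if $x_1, \dots, x_{n-1}, x_n*x_{n+1} \in X$; by the coideal property $x_n*x_{n+1} \in X \Leftrightarrow x_n, x_{n+1} \in X$, this is equivalent to $x_1, \dots, x_{n+1} \in X$.

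There is no genuine obstacle here: the lemma is a bookkeeping consequence of the hypotheses of Theorem~\ref{t:main}, and the only point requiring a moment's thought is remembering that both $X$ being a submagma and $M \setminus X$ being an ideal are needed (packaged as the biconditional $x*y \in X \Leftrightarrow x,y \in X$) to drive the equivalence in (2) through the induction.
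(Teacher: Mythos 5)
Your proof is correct and follows essentially the same route as the paper: induction on $n$, building $*_{n+1}$ from $*_n$ and $*$, and splitting one of the unary maps through $u$ and $v$ to preserve the decomposition identity, with the biconditional $x*y\in X\Leftrightarrow x,y\in X$ driving property (2). The only differences are cosmetic (the paper nests as $*_n(x_1,\dots,x_n)*x_{n+1}$ and sets $w_k'=w_k\circ u$, $w_{n+1}'=v$, whereas you merge the last two arguments and split $w_n$), and both variants work equally well.
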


\begin{proof} Let us recall that $M$ carries a continuous binary operation $*:M\times M\to M$ and continuous unary operations $u,v:M\to M$ such that $x=u(x)*v(x)$ for all $x\in M$ and for every $x_1,x_2\in M$ we have $x_1*x_2\in X$ iff $x_1,x_2\in X$. For every $n\in\IN$, define a continuous $n$-ary operation $*_n:M^n\to M$ by the recursive formula $*_1(x_1)=x_1$ and $$*_{n+1}(x_1,\dots,x_n,x_{n+1})=*_n(x_1,\dots,x_n)*x_{n+1}$$where $x_1,\dots,x_{n+1}\in M$. 

\begin{claim} For every $n\in\IN$ and $x_1,\dots,x_{n}\in M$ we have $*_n(x_1,\dots,x_n)\in X$ if and only if $x_1,\dots,x_n\in X$.
\end{claim} 

\begin{proof} For $n=1$ we have $*_n(x_1)=x_1\in X$ if and only if $x_1\in X$. Assume that for some $n\in\IN$ we have proved that $*_n(x_1,\dots,x_n)\in X$ iff $x_1,\dots,x_n\in X$. 

Then for every $x_1,\dots,x_{n+1}\in M$ we have
$*_{n+1}(x_1,\dots,x_{n+1})=*_n(x_1,\dots,x_n)*x_{n+1}\in X$ if and only if $*_n(x_1,\dots,x_n),x_{n+1}\in X$ if and only if $x_1,\dots,x_n,x_{n+1}\in X$.
\end{proof}

\begin{claim} For every $n\in\IN$ there exist continuous unary operations $w_1,\dots,w_n:M\to M$ such that $$*_n(w_1(x),\dots,w_n(x))=x$$for every $x\in M$.
\end{claim}

\begin{proof} For $n=1$ let $w_1$ be the identity map of $M$. In this case  $*_1(w_1(x))=w_1(x)=x$ for all $x\in M$. Assume that for some $n\in\IN$ we have found continuous unary operations $w_1,\dots,w_n:M\to M$ such that $*_n(w_1(x),\dots,w_n(x))=x$ for all $x\in M$. Consider the unary operations $w_1',\dots,w_{n+1}':M\to M$ defined by the formulas $w'_k=w_k\circ u$ for $k\le n$ and $w'_{n+1}=v$. Then for every $x\in M$ we have
$$
\begin{aligned}
*_{n+1}(w_1'(x),\dots,w'_{n+1}(x))&=*_n(w_1'(x),\dots,w'_n(x))*w'_{n+1}(x)=\\
&=*_n\big(w_1(u(x)),\dots,w_n(u(x))\big)*v(x)=u(x)*v(x)=x.
\end{aligned}
$$
\end{proof} 
\end{proof}

We shall need the following characterization of dimension due to Ostrand \cite{Ost}, see also \cite[3.2.4]{End}.

\begin{lemma}[Ostrand]\label{l:Ostrand} A space $Z$ has covering dimension $\dim(Z)<n$ for some $n\in\IN$ if and only if for any open cover $\U$ of $Z$ there exists an open cover $\V$ of $X$ such that
\begin{enumerate}
\item each set $V\in\V$ is contained in some set $U\in\U$;
\item $\V=\bigcup_{i=1}^n\V_i$ where each family $\V_i$ consists of pairwise disjoint sets. 
\end{enumerate}
\end{lemma}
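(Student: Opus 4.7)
The plan is to prove the two directions of the equivalence separately. The forward direction is immediate from a classical characterization of covering dimension; the reverse direction is a substantial transfinite refinement argument.

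For sufficiency ($\Leftarrow$), suppose every open cover of $Z$ admits a refinement $\V=\bigcup_{i=1}^n\V_i$ of the stated form. Since each $\V_i$ is pairwise disjoint, every point of $Z$ lies in at most one member of each $\V_i$, hence in at most $n$ members of $\V$ altogether. Thus every open cover of $Z$ has an open refinement of order at most $n$, which is exactly the classical characterization of $\dim(Z)<n$ (Engelking \cite[3.1.3]{End}).

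For necessity ($\Rightarrow$), assume $\dim(Z)<n$ and fix an open cover $\U$ of $Z$. Using paracompactness of $Z$ (being metrizable and separable) together with the order-characterization of dimension, first refine $\U$ by a locally finite open cover $\mathcal W=\{W_s\}_{s\in S}$ of order at most $n$, meaning every point of $Z$ lies in at most $n$ members of $\mathcal W$. Apply the standard shrinking lemma twice to obtain open refinements $\{W''_s\}$ and $\{W'_s\}$ satisfying $\overline{W''_s}\subseteq W'_s$ and $\overline{W'_s}\subseteq W_s$ for all $s\in S$, both still covering $Z$. Well-order $S$ and proceed by transfinite recursion: at stage $s$, any point of $\overline{W''_s}$ lies in at most $n$ sets of $\mathcal W$ in total, hence in at most $n-1$ sets $W_t$ with $t<s$. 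Consequently, among the $n$ families $\V_1,\dots,\V_n$ under construction, at each such point at least one family is locally free of previously placed members. This freedom is exploited to split $W_s$ into open pieces, distribute them among the $\V_i$, and arrange that same-$i$ pieces are globally disjoint while $\overline{W''_s}$ remains covered.

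The main obstacle is exactly this recursive construction: converting the pointwise availability of at least one free family into a concrete partition of $W_s$ into open pieces, one per family, that remain globally disjoint from the analogous pieces chosen at earlier stages while still covering $\overline{W''_s}$. Local finiteness of $\mathcal W$ is essential, since it guarantees that the already-placed pieces near any point form a finite collection (so that the relevant unions are closed and the new pieces remain open), and the nested shrinkings $\overline{W''_s}\subseteq W'_s\subseteq\overline{W'_s}\subseteq W_s$ provide the slack needed to carve out pieces without losing coverage of $Z$. A full proof follows the scheme of Engelking \cite[3.2.4]{End}.
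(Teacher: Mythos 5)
The paper does not actually prove this lemma: it is invoked as a classical theorem of Ostrand, with the proof delegated to \cite{Ost} and \cite[3.2.4]{End}. Your sufficiency direction is correct and essentially trivial, as you say: a cover split into $n$ pairwise disjoint families has no point in more than $n$ of its members, and for metrizable spaces the order characterization of $\dim$ with arbitrary open covers (or an amalgamation over a finite subcover, which does not increase the point-order) yields $\dim(Z)\le n-1$.

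The necessity direction, however, has a genuine gap at its central step. From ``each $x\in\overline{W''_s}$ lies in at most $n-1$ sets $W_t$ with $t<s$'' you infer that at $x$ at least one of the $n$ families under construction is free of previously placed members. This does not follow, because the pieces placed at a single earlier stage $t$ need not lie in a single family: to keep $\overline{W''_t}$ covered at stage $t$ you must in general distribute pieces of several (possibly all $n$) families inside $W'_t$, and these pieces of different colours necessarily overlap --- they cannot even be chosen with pairwise disjoint closures, since finitely many open sets with pairwise disjoint closures covering $\overline{W''_t}$ would force every connected subset of $\overline{W''_t}$ into a single one of them. Hence one blocking index $t$ can block all $n$ families at $x$, and your count of blocking indices gives no bound on the number of blocked families. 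Concretely, with the natural choice $V_{s,i}=W'_s\setminus\bigcup_{t<s}\overline{V_{t,i}}$ one gets openness and same-family disjointness (local finiteness makes the union of closures closed), but the covering requirement $\overline{W''_s}\subseteq\bigcup_{i=1}^n V_{s,i}$ is exactly what can fail. The known proofs use different bookkeeping: Ostrand builds the families one at a time by induction on the multiplicity of intersections (points missed by the first family lie in the union of pairwise intersections, a cover of strictly smaller order), while for metrizable $Z$ the argument of \cite[3.2.4]{End} goes through the decomposition $Z=Z_1\cup\dots\cup Z_n$ with $\dim Z_i\le 0$ and, for each $i$, a pairwise disjoint (indeed discrete) open family refining $\U$ and covering $Z_i$. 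So your sketch neither stands on its own nor matches the scheme of the reference you defer to; since the paper itself only cites the result, the cleanest options are to do the same or to reproduce one of these two arguments.
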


\begin{lemma}\label{l:step} For any $\e>0$, pair $(K,C)\in\vC$, closed subspace $F\subseteq K$, finite dimensional compact space $Z$ and  continuous maps $f:F\to Z$, $g:Z\to X$, there exists a continuous map $h:F\to M$ such that $d(h,g\circ f)<\e$ and $h^{-1}[X]=F\cap C$.
\end{lemma}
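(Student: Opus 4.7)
The strategy is to combine the map $g\circ f:F\to X$ with perturbations coming from the everywhere $\vC$-preuniversality hypothesis, glued together via the magma structure. The point is that the operation $*_n$ from Lemma~\ref{l:operations} satisfies $*_n(x_1,\dots,x_n)\in X$ if and only if every $x_i\in X$: so if for each $y\in F\setminus C$ at least one coordinate can be pushed out of $X$, while all coordinates stay in $X$ for $y\in F\cap C$, the product will have preimage exactly $F\cap C$. Since $X*X\subseteq X$, every $w_i$ maps $X$ into $X$, so the ``base'' values $w_i(g(f(y)))$ automatically lie in $X$; our job is to perturb them coordinate-by-coordinate over $F\setminus C$.

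Pick $n>\dim Z$ and fix $*_n,w_1,\dots,w_n$ as in Lemma~\ref{l:operations}. By uniform continuity of $*_n$ on a compact neighborhood of $\{(w_1(g(z)),\dots,w_n(g(z))):z\in Z\}$, choose $\delta_1>0$ so that $d(x_i,w_i(g(z)))<\delta_1$ for all $i$ implies $d(*_n(x_1,\dots,x_n),g(z))<\e$. By Lemma~\ref{l:lambda2} applied to the compact set $\{w_i(g(z)):z\in Z,\;1\le i\le n\}$, find $\delta\in(0,\delta_1)$ ensuring that $\lambda$ is defined on all $\delta$-close pairs in that set and displaces points by less than $\delta_1$. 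Using uniform continuity of each $w_i\circ g$, fix an open cover $\V$ of $Z$ fine enough that each $w_i\circ g$ has oscillation $<\delta/2$ on every $V\in\V$. Apply Ostrand's Lemma~\ref{l:Ostrand} to refine $\V$ to an open cover $\bigcup_{i=1}^n\V_i$ with each $\V_i$ pairwise disjoint. Using a standard shrinking, choose a further open cover $\bigcup_{i=1}^n\V''_i$ of $Z$ with a bijection $V''\mapsto V$ onto $\V_i$ satisfying $\overline{V''}\subseteq V$, and continuous functions $\phi_V:Z\to[0,1]$ with $\phi_V\equiv 1$ on $V''$ and $\supp\phi_V\subseteq V$.

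For each $V\in\V_i$ fix $z_V\in V$ and invoke everywhere $\vC$-preuniversality of $(M,X)$ on the open ball $B(w_i(g(z_V)),\delta)$ to obtain a continuous map $h_V:K\to M$ with $h_V[K]\subseteq B(w_i(g(z_V)),\delta)$ and $h_V^{-1}[X]=C$. Because $\V_i$ is pairwise disjoint, the formula
$$h_i(y)=\begin{cases}\lambda\bigl(w_i(g(f(y))),\,h_V(y),\,\phi_V(f(y))\bigr)&\text{if }f(y)\in V\text{ for some (unique) }V\in\V_i,\\ w_i(g(f(y)))&\text{otherwise,}\end{cases}$$
unambiguously defines a continuous map $h_i:F\to M$ (the two clauses agree wherever $\phi_V(f(y))=0$, in particular across the common boundary). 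Set $h(y)=*_n(h_1(y),\dots,h_n(y))$.

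Verification is the main step. Each $h_i(y)$ lies within $\delta_1$ of $w_i(g(f(y)))$ by the oscillation bound and Lemma~\ref{l:lambda2}, whence $d(h,g\circ f)<\e$ by the choice of $\delta_1$. For the preimage: if $y\in F\cap C$ then $h_V(y)\in X$ and $w_i(g(f(y)))\in X$, so Lemma~\ref{l:lambda}(1--3) gives $h_i(y)\in X$ for every $i$, hence $h(y)\in X$. If $y\in F\setminus C$, then $f(y)$ lies in some $V''\in\V''_i$ (since $\bigcup_{i=1}^n\V''_i$ covers $Z$); so $\phi_V(f(y))=1$ for the associated $V\in\V_i$ and $h_i(y)=h_V(y)\notin X$, forcing $h(y)\notin X$. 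The main technical obstacle is coordinating the roles of $\delta$: it must be small enough that all pairs $(w_i(g(f(y))),h_V(y))$ land in $W_\lambda$, that the $\lambda$-blending respects both the closeness estimate and the ``$\lambda$ lands in $X$'' property of Lemma~\ref{l:lambda}(3), and that the Ostrand-plus-shrinking procedure yields a genuine cover of $Z$.
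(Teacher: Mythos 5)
Your proposal is correct and is essentially the paper's own proof: take $n>\dim Z$, decompose via Lemma~\ref{l:operations}, refine a fine cover of $Z$ by Ostrand's Lemma~\ref{l:Ostrand} into $n$ pairwise disjoint families, blend each coordinate $w_i\circ g\circ f$ with the maps given by everywhere $\vC$-preuniversality using $\lambda$ and Urysohn-type functions, and reassemble with $*_n$; the constant coordination you flag is handled in the paper in exactly the same way (choosing $\delta$ so that $*_m$ maps products of $2\delta$-balls into $\e$-balls). The one justification to repair: $w_i[X]\subseteq X$ does \emph{not} follow from $X*X\subseteq X$, but from $*_n(w_1(x),\dots,w_n(x))=x\in X$ combined with the ``only if'' half of Lemma~\ref{l:operations}(2) (i.e.\ the ideal property of $M\setminus X$), which is how the paper derives $w_i\circ g\circ f(x)\in X$.
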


\begin{proof} Let $m=1+\dim(Z)$. 
By Lemma~\ref{l:operations}, there exist a continuous $m$-ary operation $*_m:M^m\to M$ and continuous unary operations $w_1,\dots,w_m:M\to M$ such that 
\begin{itemize}
\item[(i)] $x=*_m(w_1(x),\dots,w_m(x))$ for all $x\in M$;
\item[(ii)] for any $x_1,\dots,x_m\in M$ the inclusion $*_m(x_1,\dots,x_m)\in X$ holds iff $x_1,\dots,x_m\in X$.
\end{itemize}

Using the compactness of the set $g[Z]\subseteq M$ and the identity (i), we can find $\delta>0$ such that for every $x\in g[Z]$ and every sequence $(x_i)_{i=1}^m\in \prod_{i=1}^m B(w_i(x),2\delta)$ we have $*_m(x_1,\dots,x_m)\in B(x,\e)$.

 By Lemma~\ref{l:lambda2}, there exists a positive $\eta<\delta$ such that for every $x\in \bigcup_{i=1}^mw_i\circ g[Z]$ we have $$B(x,\eta)\times B(x,\eta)\subseteq W_\lambda\quad\mbox{and}\quad\lambda\big[B(x,\eta)\times B(x,\eta)\times[0,1]\big]\subseteq  B(x,\delta).$$ By the continuity of $g$ and compactness of $Z$, there exists a finite cover $\U$ of $Z$ by nonempty open sets such that for each set $U\in\U$ and each $k\in\{1,\dots,m\}$ the image $w_k\circ g[U]$ has diameter $<\eta$ in the metric space $(M,d)$. By Ostrand's Lemma~\ref{l:Ostrand}, we can additionally assume that  $\U=\bigcup_{i=1}^m\U_i$ where each family $\U_i$ consists of pairwise disjoint sets. By the normality of the compact space $Z$, every set $U\in\U$ contains a closed subset $F_U\subseteq U$ of $Z$ such that $Z=\bigcup_{U\in\U}F_U$. By Urysohn's Lemma \cite[1.5.11]{Eng}, there exists a continuous map $\xi_U:Z\to[0,1]$ such that $F_U\subseteq \xi_U^{-1}(1)$ and $Z\setminus U\subseteq \xi_U^{-1}(0)$. For every $U\in\U$ choose a point $z_U\in U$.  

Since the pair $(M,X)$ is everywhere $\vC$-preuniversal, for every $i\in\{1,\dots,m\}$ and $U\in\U_i$ there exists a map $\phi_U:K\to M$ such that $\phi_U[K]\subseteq B(w_i\circ g(z_U),\eta)$ and $\phi_U^{-1}[X]=C$. Consider the map $\varphi_U:F\to M$ defined by the formula
$$\varphi_U(x)=
\begin{cases}
\lambda\big(w_i\circ g\circ f(x),\phi_U(x),\xi_U\circ f(x)\big)&\mbox{if $f(x)\in U$};\\
w_i\circ g\circ f(x)&\mbox{otherwise}.
\end{cases}
$$

\begin{claim}\label{cl3} For every $i\in\{1,\dots,m\}$ and $U\in\U_i$ the map $\varphi_U:F\to M$ is continuous and has the following properties for every $x\in F$:
\begin{enumerate}
\item $d(\varphi_U(x),w_i\circ g\circ f(x))<2\delta$;
\item $f^{-1}[F_U]\cap \varphi_U^{-1}[X]=f^{-1}[F_U]\cap C$;
\item if $\xi_U\circ f(x)<1$, then $\varphi_U(x)\in X$.
\end{enumerate}
\end{claim}

\begin{proof} The continuity of $\varphi_U$ follows from the definition of $\varphi_U$ and the equality $\lambda(x,y,0)=x$ holding for all $(x,y)\in W_\lambda$. 
\smallskip

1. Fix any $x\in F$. If $f(x)\notin U$, then $\varphi_U(x)=w_i\circ g\circ f(x)\in B(w_i\circ g\circ f(x),2\delta)$. If $f(x)\in U$, then the choice of the cover $\U$ ensures that $$w_i\circ g\circ f(x)\in w_i\circ g[U]\subseteq B(w_i\circ g(z_U),\eta).$$ On the other hand, $\phi_U(x)\in B(w_i\circ g(z_U),\eta)$. Then $$(w_i\circ g\circ f(x),\phi_U(x))\in B(w_i\circ g(z_U),\eta)\times B(w_i\circ g(z_U),\eta)\subseteq W_\lambda$$ and $$\begin{aligned}
\varphi_U(x)&=\lambda(w_i\circ g\circ f(x),\phi_U(x),\xi_U\circ f(x))\in\\
&\in \lambda\big[B(w_i\circ g(z_U),\eta)\times B(w_i\circ g(z_U),\eta)\times[0,1]\big]\subseteq\\
&\subseteq B(w_i\circ g(z_U),\delta)\subseteq B(w_i\circ g\circ f(x),\delta+\eta)\subseteq B(w_i\circ g\circ f(x),2\delta).
\end{aligned}
$$
\smallskip

2. Fix any $x\in F$ with $f(x)\in F_U$. We need to prove that $\varphi_U(x)\in X$ if and only if $x\in C$. Since $f(x)\in F_U\subseteq \xi_U^{-1}(1)$, we have
$$\varphi_U(x)=\lambda(w_i\circ g\circ f(x),\phi_U(x),\xi_U\circ f(x))=\lambda(w_i\circ g\circ f(x),\phi_U(x),1)=\phi_U(x).$$
The choice of the map $\phi_U$ ensures that $\varphi_U(x)=\phi_U(x)\in X$ if and only if $x\in C$.
\smallskip

3. Assume that $\xi_U\circ f(x)<1$. Let us recall $g\circ f(x)\in g[Z]\subseteq X$. By the properties (i) and (ii), $$*_m(w_1\circ g\circ f(x),\dots,w_m\circ g\circ f(x))=g\circ f(x)\in X$$ and hence $w_j\circ g\circ f(x)\in X$ for all $j\in\{1,\dots,m\}$. In particular, $w_i\circ g\circ f(x)\in X$. If $\xi_U\circ f(x)=0$ or $\phi_U(x)=w_i\circ g\circ f(x)$, then 
$$\varphi_U(x)=\lambda(w_i\circ g\circ f(x),\phi_U(x),\xi_U\circ f(x))=w_i\circ g\circ f(x)\in X$$according to Lemma~\ref{l:lambda}(1,2). If $\xi_U\circ f(x)>0$ and $\phi_U(x)\ne w_i\circ g\circ f(x)$, then 
$$\varphi_U(x)=\lambda(w_i\circ g\circ f(x),\phi_U(x),\xi_U\circ f(x))\in X$$by Lemma~\ref{l:lambda}(3).
\end{proof}

For every $i\in\{1,\dots,m\}$, consider the map $\psi_i:F\to M$ defined by the formula
$$\psi_i(x)=\begin{cases}\varphi_U(x)&\mbox{if $x\in U$ for some $U\in\U_i$};\\
w_i\circ g\circ f(x)&\mbox{otherwise}.
\end{cases}
$$
It is easy to see that the map $\psi_i$ is well-defined and continuous.

Finally, consider the map $h:F\to M$ defined by the formula
$$h(x)=*_m(\psi_1(x),\dots,\psi_m(x))\quad\mbox{for \ $x\in K$}.
$$

We claim that the function $h$ has the desired properties.

\begin{claim} For every $x\in F$ we have $d(h(x),g\circ f(x))<\e$.
\end{claim}

\begin{proof} Claim~\ref{cl3} implies that $\psi_i(x)\in B(w_i\circ g\circ f(x),2\delta)$ for every $i\in\{1,\dots,m\}$ and hence
$$h(x)=*_m(\psi_1(x),\dots,\psi_m(x))\in B(g\circ f(x),\e)$$according to the choice of $\delta$.
\end{proof}

\begin{claim} $h^{-1}[X]=F\cap C$.
\end{claim}

\begin{proof} If $x\in F\cap C$, then the definition of $\psi_i$ and Claim~\ref{cl3} imply that $\psi_i(x)\in X$ and then $h(x)=*_m(\psi_1(x),\dots,\psi_m(x))\in X$ by the property (ii) of the operation $*_m$. Now assume that $x\in F\setminus C$. Since $Z=\bigcup_{U\in\U}F_U=\bigcup_{i=1}^m\bigcup_{U\in\U_i}F_U$, there exists $i\in\{1,\dots,m\}$ and $U\in\U_i$ such that $f(x)\in F_U$. By Claim~\ref{cl3}, $\psi_i(x)=\varphi_U(x)\notin X$. Now the property (ii) of the operation $*_m$ ensures that $h(x)=*_m(\psi_1(x),\dots,\psi_m(x))\notin X$.
\end{proof}
\end{proof}

A function $f:X\to Y$ between topological spaces is {\em proper} if for any compact set $K\subseteq Y$ the preimage $f^{-1}[K]$ is compact.

\begin{lemma}\label{l:proper} For any locally compact space $Z$, there exists a proper map $f:Z\to[0,\infty)$.
\end{lemma}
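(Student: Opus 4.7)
The plan is to reduce the problem to an easy metric computation by passing to the one-point compactification. Since every topological space in the paper is assumed to be metrizable and separable, the locally compact space $Z$ is in particular second countable and locally compact Hausdorff. A classical fact (see e.g.\ Engelking's \emph{General Topology}) then tells us that the one-point compactification $Z^+ = Z \cup \{\infty\}$ is a compact metrizable space. I will invoke this and fix once and for all a metric $\rho$ generating the topology of $Z^+$.

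Next I would define the function
\[
f: Z \to [0,\infty), \qquad f(z) \;=\; \frac{1}{\rho(z,\infty)}.
\]
Since $\rho(z,\infty)>0$ for every $z\in Z$, the map $f$ is well-defined and continuous. To verify properness, take any compact subset $K\subseteq[0,\infty)$ and let $N=\max K$. Then
\[
f^{-1}[K] \;\subseteq\; f^{-1}\bigl[[0,N]\bigr] \;=\; \bigl\{z\in Z : \rho(z,\infty)\ge 1/N\bigr\},
\]
and the latter set is closed in the compact space $Z^+$ while avoiding the point $\infty$, so it is a compact subset of $Z$. As $f^{-1}[K]$ is a closed subset of a compact set (closed because $f$ is continuous and $K$ is closed), it is itself compact, giving properness.

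I do not anticipate any real obstacle here; the only subtle point is justifying that $Z^+$ is metrizable, which is where separability (equivalently, second countability for metric spaces) is used through Urysohn's metrization theorem applied to the compact Hausdorff second countable space $Z^+$. If one preferred to avoid the one-point compactification, an alternative plan would be to write $Z=\bigcup_{n\in\IN}K_n$ with $K_n\subseteq\mathrm{int}(K_{n+1})$ compact, choose by Urysohn's lemma continuous functions $\phi_n:Z\to[0,1]$ with $\phi_n{\restriction}K_n\equiv 0$ and $\phi_n{\restriction}(Z\setminus\mathrm{int}(K_{n+1}))\equiv 1$, and set $f=\sum_{n\in\IN}\phi_n$; the sum is locally finite and $f\ge n$ outside $K_n$, which again forces preimages of bounded sets to be compact.
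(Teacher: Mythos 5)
Your proof is correct and follows essentially the same route as the paper: pass to the one-point compactification $Z\cup\{\infty\}$ (metrizable by second countability), take a continuous function vanishing exactly at $\infty$ (the paper uses an abstract such $g$, you use $\rho(\cdot,\infty)$), and invert it, with properness checked exactly as you do. The only cosmetic differences are that the paper splits off the trivial compact case and that your formula $f^{-1}[[0,N]]$ needs the harmless caveat $N>0$.
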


\begin{proof} The statement is trivial if $Z$ is compact. So we assume that $Z$ is not compact. Let $\alpha Z=Z\cup\{\infty\}$ be the one-point compactification of $Z$. Since $\alpha Z$ is metrizable and separable, there exists a continuous function $g:\alpha Z\to [0,1]$ such that $g^{-1}(0)=\{\infty\}$. It is easy to see that the function 
$$f:Z\to[1,\infty)\subseteq[0,\infty),\quad f:z\mapsto\frac1{g(z)},$$
is proper.
\end{proof}

\begin{lemma}\label{l10} Let $(K,C)\in\vC$, $F$ be a closed set in $K$ and $\e:K\setminus F\to(0,1]$ be a continuous function. Let $Z$ be a locally compact locally finite-dimensional space, $f:K\setminus F\to Z$ be a proper map, and $g:Z\to X$ be a continuous map. There exists a continuous map $h:K\setminus F\to M$ such that $h^{-1}[X]=C\setminus F$ and $d(h(x),g\circ f(x))<\e(x)$ for every $x\in K\setminus F$.
\end{lemma}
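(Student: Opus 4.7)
My plan is to exhaust $K\setminus F$ by compacta and then run a locally finite version of the construction of Lemma~\ref{l:step}. Since $K\setminus F$ is open in the compact metric space $K$, it is locally compact; by Lemma~\ref{l:proper} I pick a proper continuous function $\rho\colon K\setminus F\to[1,\infty)$ and set $L_n=\rho^{-1}([n,n+1])$ for $n\in\IN$. Each $L_n$ is a compact subset of $K\setminus F$, the family $\{L_n\}_{n\in\IN}$ is locally finite in $K\setminus F$ and covers it, and $L_n\cap L_m=\emptyset$ whenever $|n-m|\geq 2$. By continuity, each $f[L_n]$ is compact, and the local finite-dimensionality of $Z$ combined with the sum theorem applied to a finite subcover by finite-dimensional open sets forces $\dim f[L_n]<\infty$. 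Set $m_n=1+\dim f[L_n]$ and $\e_n=\min_{x\in L_n}\e(x)>0$.

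Next, I mimic the argument of Lemma~\ref{l:step} but with a locally finite open cover of $K\setminus F$ in place of a finite cover of $Z$. For each $n$, applying Ostrand's Lemma~\ref{l:Ostrand} to the finite-dimensional compact $f[L_n]$, and refining so that the cover elements are small both in $\e$-variation and in $w_k\circ g$-diameter (the thresholds coming from Lemmas~\ref{l:lambda2} and~\ref{l:operations} for the local parameter $m_n$), I obtain a finite open cover $\U^{(n)}=\bigcup_{i=1}^{m_n}\U^{(n)}_i$ of a neighborhood of $L_n$ in $K\setminus F$ in which each $\U^{(n)}_i$ is pairwise disjoint. Combining over $n$ yields a locally finite cover $\U=\bigcup_n\U^{(n)}$ of $K\setminus F$. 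I then replay the construction in the proof of Lemma~\ref{l:step}: for each $U\in\U$ pick a base point $z_U\in U$ and use everywhere $\vC$-preuniversality of $(M,X)$ to choose $\phi_U\colon K\to M$ with $\phi_U^{-1}[X]=C$ and $\phi_U[K]$ contained in a tiny ball near the relevant $w_i\circ g\circ f(z_U)$; blend via $\lambda$ into local maps $\varphi_U$, aggregate by disjointness within each $\U^{(n)}_i$ into $\psi_{n,i}$, and combine via $*_{m_n}$ to define $h$. The identity $*_m(w_1(x),\dots,w_m(x))=x$ from Lemma~\ref{l:operations} guarantees that at points where no local piece is active the default value coincides with $g\circ f(x)$, and this provides continuity across the boundaries between consecutive pieces $L_n$; the biconditional ``$*_m(x_1,\dots,x_m)\in X$ iff $x_1,\dots,x_m\in X$'' of Lemma~\ref{l:operations} then gives the required exact equality $h^{-1}[X]=C\setminus F$.

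The main obstacle is coordinating the construction across the compact pieces $L_n$ while the dimensional parameter $m_n$ may grow with $n$, yet keeping $h$ continuous on all of $K\setminus F$ with $h^{-1}[X]$ pointwise equal to $C\setminus F$ and with the variable (not merely uniform) bound $d(h(x),g\circ f(x))<\e(x)$. The two key levers are the local finiteness of $\{L_n\}$, which ensures that only finitely many of the local constructions are active near any point of $K\setminus F$, and the identity $*_m(w_1(x),\dots,w_m(x))=x$, which allows an inactive local piece to default to the baseline $w_i\circ g\circ f$ without perturbing either the global approximation or the exact $X$-preimage. The variable $\e$ is then handled by choosing, in the refinement step above, the diameter thresholds of $\U^{(n)}$ below $\e_n$ so that all local estimates from the proof of Lemma~\ref{l:step} carry over point-by-point.
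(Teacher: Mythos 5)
Your overall plan---exhaust $K\setminus F$ by the compacta $L_n$, run the finite-dimensional construction of Lemma~\ref{l:step} locally, and glue---follows the same general spirit as the paper, but the gluing step, which is the actual content of this lemma, is where the proposal has a genuine gap. The difficulty sits at the seams between adjacent pieces: near $L_n\cap L_{n+1}$ the local constructions attached to $L_n$ and $L_{n+1}$ are simultaneously (partially) active, they use products of different arities $m_n\neq m_{n+1}$, and you never specify a single formula for $h$ there. Your proposed mechanism, letting inactive pieces ``default'' via the identity $*_m(w_1(x),\dots,w_m(x))=x$, cannot secure both requirements at once. If there is any region where all pieces are inactive, then $h=g\circ f$ there, which takes values in $X$, so the exactness $h^{-1}[X]=C\setminus F$ fails at points of that region lying outside $C$. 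If instead you insist that every point lies in some fully active (detecting) piece, then at the seams you must merge a partially active $m_n$-tuple with a partially active $m_{n+1}$-tuple, and in a general decomposable magma there is no associativity, commutativity or idempotency available to do this: for instance $\bigl(g\circ f(x)\bigr)*\bigl(g\circ f(x)\bigr)\neq g\circ f(x)$ in general, so even taking the $*$-product of the two local products destroys the default value and the approximation estimate. The proposal asserts continuity and exactness across the boundaries but does not construct the map there, and the natural readings of the construction fail one of the two conditions.

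The paper resolves exactly this tension by a two-factor alternation that your plan lacks. Write $g\circ f=(u\circ g\circ f)*(v\circ g\circ f)$ and set $h=\varphi*\psi$, where $\varphi$ approximates $u\circ g\circ f$ and $\psi$ approximates $v\circ g\circ f$. Lemma~\ref{l:step} (applied to the compact finite-dimensional sets $p^{-1}[[0,n+1]]$, so the growing arity is confined inside that lemma and never enters the gluing) yields maps $\zeta_n$ on $K_{n+1}$ with the exact preimage property, approximating $u\circ g\circ f$ for even $n$ and $v\circ g\circ f$ for odd $n$; these are blended into $\varphi$ (even indices) and $\psi$ (odd indices) by staggered bump functions of $p\circ f$ whose supports are pairwise disjoint within each parity class and arranged so that for every $x$ the index $n$ with $n\le p\circ f(x)<n+1$ has its bump equal to $1$. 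Hence at every point one of the two factors equals an exact detector $\zeta_n(x)$, so $x\notin C$ forces $h(x)\in\bigl((M\setminus X)*M\bigr)\cup\bigl(M*(M\setminus X)\bigr)\subseteq M\setminus X$, while for $x\in C$ the other, transitioning factor stays in $X$ by Lemma~\ref{l:lambda}(3), giving $h(x)\in X*X\subseteq X$; continuity at the seams is automatic because each factor transitions only where the other factor's detector is at full strength. Without this alternation (or some substitute playing the same role) your construction cannot simultaneously achieve continuity across the pieces and the exact equality $h^{-1}[X]=C\setminus F$.
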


\begin{proof} By Lemma~\ref{l:proper}, there exists a proper map $p:Z\to [0,\infty)$. We lose no generality assuming that $0\in p[Z]$. For every $n\in\IN$, consider the compact subset $Z_n=p^{-1}\big[[0,n]\big]$ of $Z$ and the compact subset $K_n=f^{-1}[Z_n]$ of $K\setminus F$. Since the space $Z$ is locally finite-dimensional, the compact subspaces $Z_n$ of $Z$ are finite-dimensional.  

Let us recall that $x=u(x)*v(x)$ for any $x\in M$. In particular, for every $z\in Z$ we have $u(g(z))*v(g(z))=g(z)\in g[Z]\subseteq X$, which implies $u\circ g(z),v\circ g(z)\in X$.
  
For every $n\in\w$, by the compactness of the set $g[Z_{n+2}]\subseteq M$, there exists a number $\delta_n>0$ such that for any $x\in g[Z_{n+2}]$ and $x_1\in B(u(x),\delta_n)$, $x_2\in B(v(x),\delta_n)$ we have $d(x,x_1*x_2)<\min \e[Z_{n+2}]$. If $n>0$ then we can additionally assume that $\delta_n\le\delta_{n-1}$. By Lemma~\ref{l:lambda2}, there exists $\eta_n>0$ such that for every $x\in u\circ g[Z_{n+1}]\cup v\circ g[Z_{n+1}]$ we have $B(x,\eta_n)\times B(x,\eta_n)\subseteq W_\lambda$ and $$\lambda\big[B(x,\eta_n)\times B(x,\eta_n)\times[0,1]\big]\subseteq B(x,\delta_n).$$

For every $n\in\w$ consider the continuous piecewise linear function $\xi_n:[0,\infty)\to[0,1]$ such that 
$$\xi_n(t)=\begin{cases}
1&\mbox{if $n\le t\le n+1$};\\
1+3(t-n)&\mbox{if $n-\frac13\le t\le n$};\\
1-3(t-n-1)&\mbox{if $n+1\le t\le n+\frac43$};\\
0&\mbox{otherwise}.
\end{cases}
$$ 
By Lemma~\ref{l:step}, for every even number $n\in\w$, there exists a continuous map $\zeta_n:K_{n+1}\to M$ such that $\zeta_n^{-1}[X]=K_{n+1}\cap C$ and 
 $d(u\circ g\circ f(x),\zeta_n(x))<\eta_n$ for every $x\in K_{n+1}$. By the same Lemma~\ref{l:step}, for every odd number $n\in\w$ there exists a continuous map $\zeta_n:K_{n+1}\to M$ such that $\zeta_n^{-1}[X]=K_{n+1}\cap C$ and 
 $d(v\circ g\circ f(x),\zeta_n(x))<\eta_n$ for every $x\in K_{n+1}$.

Let $\varphi,\psi:K\setminus B\to M$ be maps defined by the formulas
$$\varphi(x)=\begin{cases}\lambda\big(u\circ g\circ f(x),\zeta_{2n}(x),\xi_{2n}(p\circ f(x))\big)&\mbox{if $p\circ f(x)\in [2n-\frac13,2n+\frac43]$ for some $n\in\w$};\\
u\circ g\circ f(x)&\mbox{otherwise};
\end{cases}
$$ 
and 
$$\psi(x)=\begin{cases}\lambda\big(v\circ g\circ f(x),\zeta_{2n+1}(x),\xi_{2n+1}(p\circ f(x))\big)&\mbox{if $p\circ f(x)\in [2n+\frac23,2n+\frac73]$ for some $n\in\w$};\\
v\circ g\circ f(x)&\mbox{otherwise}.
\end{cases}
$$ 
It can be shown that the maps $\varphi$ and $\psi$ are continuous. 

Finally, define a map $h:K\setminus B\to M$ by the formula
$$h(x)=\varphi(x)*\psi(x)\quad\mbox{for \ $x\in K\setminus B$}.$$
 
\begin{claim} $h^{-1}[X]=C\setminus F$.
\end{claim}

\begin{proof} Let $x\in K\setminus F$ be any point. Find a unique number $n\in\w$ such that $n\le p\circ f(x)<n+1$. We recall that $u\circ g\circ f(x),v\circ g\circ f(x)\in X$.

First we assume that $x\notin C$. In this case $\zeta_n(x)\notin X$ by the choice of $\zeta_n$.

If $n$ is even, then 
$$\varphi(x)=\lambda(u\circ g\circ f(x),\zeta_{n}(x),\xi_n\circ p\circ f(x))=\lambda(u\circ g\circ f(x),\zeta_n(x),1)=\zeta_n(x)\notin X$$ and hence
$h(x)=\varphi(x)*\psi(x)\in (M\setminus X)*M\subseteq M\setminus X$.

If $n$ is odd, then 
$$\psi(x)=\lambda(v\circ g\circ f(x),\zeta_{n}(x),\xi_n\circ p\circ f(x))=\lambda(v\circ g\circ f(x),\zeta_n(x),1)=\zeta_n(x)\notin X$$ and hence
$h(x)=\varphi(x)*\psi(x)\in M*(M\setminus X)\subseteq M\setminus X$.
In both cases we obtain that $h(x)\notin X$.

Now assume that $x\in C$. In this case $\zeta_k(x)\in X$ for any $k\in\w$ with $|k-n|\le 1$, which implies $\varphi(x)\in X$ and $\psi(x)\in X$, see Lemma~\ref{l:lambda}(3). Then $h(x)=\varphi(x)*\psi(x)\in X*X\subseteq X$.
\end{proof}

\begin{claim} $d(h(x),g\circ f(x))<\e(x)$ for all $x\in K\setminus F$.
\end{claim}

\begin{proof} Take any $x\in K\setminus F$ and find a unique number $n\in\w$ with $n\le p\circ f(x)<n+1$. Then $f(x)\in Z_{n+1}$ and $x\in K_{n+1}$. 

\begin{claim} $\varphi(x)\in B(u\circ g\circ f(x),\delta_{n-1})$.
\end{claim}

\begin{proof} The inclusion is trivially true whenever $\varphi(x)=u\circ g\circ f(x)$. So, we assume that $\varphi(x)\ne u\circ g\circ f(x)$. In this case $\varphi(x)=\lambda(u\circ g\circ f(x),\zeta_{2k}(x),\xi_{2k}\circ p\circ f(x))$ for some $k\in\w$ with $p\circ f(x)\in [2k-\frac13,2k+\frac43]$. It follows from $p\circ f(x)\in[n,n+1]\cap [2k-\frac13,2k+\frac43]$ that $|n-2k|\le 1$. The choice of $\zeta_{2k}$ ensures that $\zeta_{2k}(x)\in B(u\circ g\circ f(x),\eta_{2k})$ and the choice of $\eta_{2k}$ guarantees that $$\varphi(x)=\lambda(u\circ g\circ f(x),\zeta_{2k}(x),\xi_{2k}\circ p\circ f(x))\in B(u\circ g\circ f(x),\delta_{2k})\subseteq B(u\circ g\circ f(x)),\delta_{n-1}).$$
\end{proof} 

By analogy we can prove that $\psi(x)\in B(v\circ g\circ f(x),\delta_{n-1})$.

Now the choice of $\delta_{n-1}$ guarantees that
$$d(g\circ f(x),h(x))=d(g\circ f(x),\varphi(x)*\psi(x))<\min \e[Z_{n+1}]\le \e(x).$$
\end{proof}
\end{proof}

\begin{lemma}\label{l11} Let $K$ be a compact space, $F$ be a closed subspace of $K$ and $f:K\to M$ be a continuous map. For every continuous function $\e:K\setminus F\to(0,1]$, there exist a locally compact locally finite-dimensional space $Z$, a proper map $g:K\setminus F\to Z$, and a map $h:Z\to X$ such that $d(f(x),h\circ g(x))\le\e(x)$ for every $x\in K\setminus F$.
\end{lemma}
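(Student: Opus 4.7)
The strategy is to factor $f|_{K\setminus F}$, up to $\e$-error, through the nerve of a sufficiently fine locally finite open cover of $K\setminus F$, ending in $X$ via the homotopy density map $H$ of Lemma~\ref{l:Hom}.

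By the Arens--Eells theorem, embed $(M,d)$ as a closed isometric subset of a normed space $L$ and fix a retraction $r:W\to M$ of some open neighborhood $W\subseteq L$ of $M$ (possible since $M$ is an ANR). By Lemma~\ref{l:proper}, choose a proper map $p:K\setminus F\to[0,\infty)$ and set $A_n=p^{-1}\big[[n,n+1]\big]$. Using uniform continuity of $r$ on compact subsets of $W$ together with continuity of $f$ and $\e$, one finds positive numbers $\delta_n$ such that for every $x\in A_n$ the closed $\delta_n$-ball of $f(x)$ in $L$ lies in $W$ and $r$ maps it into $B(f(x),\tfrac12\e(x))$. Cover each compact $A_n$ by finitely many open subsets of $K\setminus F$ contained in $p^{-1}((n-1,n+2))$ and of $f$-image diameter below $\delta_n$, and let $\U$ be the union of these finite covers. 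Then $\U$ is a locally finite open cover of $K\setminus F$ whose members have compact closures in $K\setminus F$.

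Form the nerve $Z=N(\U)$. Since $\U$ is locally finite and each $\bar U$ is compact, every vertex $v_U$ is incident to only finitely many simplices, so $Z$ is locally compact and locally finite-dimensional. Choose a partition of unity $(\lambda_U)_{U\in\U}$ with $\lambda_U^{-1}((0,1])=U$ and define $g:K\setminus F\to Z$ by $g(x)=\sum_U\lambda_U(x)v_U$. For any compact $L'\subseteq Z$ touching only the vertices $v_{U_1},\dots,v_{U_k}$, the preimage $g^{-1}[L']$ is a closed subset of $\bar U_1\cup\cdots\cup\bar U_k$, hence compact, so $g$ is proper. Pick $x_U\in U$ for each $U\in\U$, extend the vertex map $v_U\mapsto f(x_U)$ simplex-wise affinely to a continuous $\tilde h_0:Z\to L$, and set $h_0=r\circ\tilde h_0:Z\to M$. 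The choice of $\delta_n$ guarantees $\tilde h_0(Z)\subseteq W$, and for every $x\in A_n$ the point $\tilde h_0(g(x))$ is a convex combination in $L$ of points $f(x_U)$ with $x\in U$, all within $\delta_n$ of $f(x)$; hence $d(h_0(g(x)),f(x))<\tfrac12\e(x)$.

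To land in $X$, pick a continuous $\mu:Z\to(0,1]$ by choosing $\mu(v_U)<\tfrac12\inf_{\bar U}\e$ (positive since $\bar U\subseteq K\setminus F$ is compact and $\e$ is positive and continuous) and extending affinely over simplices; then $\mu(g(x))$ is a convex combination of values bounded by $\tfrac12\e(x)$, so $\mu(g(x))<\tfrac12\e(x)$. Define $h(z)=H(h_0(z),\mu(z))$. By parts (2) and (3) of Lemma~\ref{l:Hom}, $h(z)\in X$ and $d(h(z),h_0(z))<\mu(z)$, whence
$$d(h(g(x)),f(x))\le d(h_0(g(x)),f(x))+\mu(g(x))<\tfrac12\e(x)+\tfrac12\e(x)=\e(x)$$
for every $x\in K\setminus F$. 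The main technical obstacle is the cover construction: simultaneously balancing local finiteness and compact-closure of its members (needed for properness of $g$ and local finite-dimensionality of $Z$) with fineness in $f$ (so the Arens--Eells affine extension stays in $W$ and $r$ controls the error by $\e/2$) and shrinking with $p$ (so $g$ is proper). The remainder is a classical nerve-plus-ANR argument combined with the push-into-$X$ trick already used in Lemma~\ref{l:lambda}.
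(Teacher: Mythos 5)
Your nerve argument is correct in substance and takes a genuinely different route from the paper. The paper's proof never forms a nerve: it embeds $K$ into the Hilbert cube so that $K$ misses the finite-dimensional faces $[0,1]^{m}\times\{0\}^{\w\setminus m}$, extends $f$ to a map $\bar f$ on a basic neighborhood $O_K$ (using that $M$ is an ANR), and defines $\bar g:K\to[0,1]^\w$ by damping the coordinates with functions $\xi_i$ tied to a shrinking sequence $(U_n)$ of neighborhoods of $F$; then $Z=\bar g[K\setminus F]$ lies in the finite-dimensional faces, which yields local compactness, local finite-dimensionality and properness of $g=\bar g{\restriction}K\setminus F$ all at once, and $h(z)=H(\bar f(z),\delta(z))$ reuses the ANR extension $\bar f$ directly. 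You instead take $Z$ to be the nerve of a locally finite open cover of $K\setminus F$ whose members have compact closures and small $f$-images, map $K\setminus F$ to the nerve by a partition of unity, map the nerve back to $M$ via Arens--Eells plus a neighborhood retraction, and finish with the same push into $X$ by $H$. Your route makes local compactness and local finite-dimensionality of $Z$ automatic (a locally finite countable nerve is a locally finite-dimensional polyhedron) at the price of the Arens--Eells/retraction step and the cover bookkeeping; the paper's route avoids nerves and retractions entirely, keeping everything inside the Hilbert cube, but needs the careful coordinate-truncation estimates of Claims~\ref{cl9} and \ref{cl10}.

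One small repair is needed in your error estimate. A point $x\in A_n=p^{-1}\big[[n,n+1]\big]$ may lie in cover members produced for the neighboring slabs $A_{n\pm1}$, so the vertices $f(x_U)$ with $x\in U$ are only within $\max\{\delta_{n-1},\delta_n,\delta_{n+1}\}$ of $f(x)$, not within $\delta_n$ as you assert; the same issue affects the claim $\tilde h_0(Z)\subseteq W$. This is fixed by requiring the members covering $A_n$ to have $f$-image diameter less than $\min\{\delta_{n-1},\delta_n,\delta_{n+1}\}$ (equivalently, by choosing $\delta_n$ to work on the compact set $A_{n-1}\cup A_n\cup A_{n+1}$); with that adjustment every vertex value attached to a simplex containing $g(x)$, $x\in A_n$, lies in the closed $\delta_n$-ball of $f(x)$, and your estimates $d(h_0(g(x)),f(x))<\tfrac12\e(x)$ and $\mu(g(x))<\tfrac12\e(x)$ go through unchanged.
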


\begin{proof}  We can embed $K$ into the Hilbert cube $[0,1]^\w$ so that $K$ is disjoint with the set $\bigcup_{n\in\w}([0,1]^n\times[0,1]^{\w\setminus n})$. Since $M$ is an ANR, the map $f:K\to M$ can be extended to a map $\bar f:O_K\to M$ defined on some neighborhood $O_K$ of $K$ in $[0,1]^\w$. We lose no generality assuming that $O_K$ is of basic form $O_K=W\times [0,1]^{\w\setminus m}$ for some $m\in\w$ and some open set $W$ in $[0,1]^m$.

For every $n\in\w$ consider the projection $\pi_n:[0,1]^\w\to [0,1]^n$, $\pi_n:x\mapsto x{\restriction}n$, to the first $n$ coordinates.

Choose a decreasing sequence of open sets $(U_n)_{n\in\w}$ in $K$ such that
\begin{itemize}
\item $U_0=U_1=K$;
\item for every $n\in\IN$ the closure of $U_n$ is contained in $U_{n-1}$ and
\item each open neighborhood $O_F$ of the set $F$ in $K$ contains some set $U_n$.
\end{itemize}

Using the continuity of the map $\bar f$ and the compactness of the space $K$, we can construct an increasing sequence of numbers $(m_n)_{n\in\w}$ such that $m_0=0$, $m_1\ge m$ and for every $n\in\IN$, $x\in K$ and $y\in O_K$ with $\pi_{m_n}(x)=\pi_{m_n}(y)$ we have $$d(f(x),\bar f(y))<\tfrac12\min \e[K\setminus U_{n+1}].$$ 

For every $i\in\w$ find a unique number $n\in\w$ with $m_n\le i<m_{n+1}$ and choose a continuous function $\xi_i:K\to[0,1]$ such that $\xi_i[K\setminus U_n]\subseteq\{0\}$ and $\xi_i[U_{n+1}]\subseteq\{1\}$. It follows that $\xi_i[K]=\xi_i[U_1]\subseteq\{1\}$ for every $i<m_1$. Consider the continuous map 
$$\bar g:K\to[0,1]^\w,\quad \bar g:(x_i)_{i\in\w}\mapsto (\xi_i(x)\cdot x_i)_{i\in\w}.$$
Taking into account that $m\le m_1$ and $\xi_i[K]\subseteq\{1\}$ for every $i<m_1$, we conclude that $\pi_m=\pi_m\circ \bar g$ and hence the map $\bar f\circ\bar g:K\to M$ is well-defined.

\begin{claim}\label{cl9} For every $n\in\w$ and $x\in U_n\setminus U_{n+1}$ we have 
$$d(f(x),\bar f\circ \bar g(x))\le \tfrac12\e(x).$$
\end{claim}

\begin{proof} For every $i<m_n$ we can find a unique $k<n$ such that $m_k\le i<m_{k+1}\le m_n$ and conclude that $\xi_i[U_n]\subseteq\xi_i[U_{k+1}]\subseteq \{1\}$. Then $\pi_{m_n}(\bar g(x))=\pi_{m_n}(x)$ and hence $$d(f(x),\bar f\circ\bar g(x))<\tfrac12\min\e[K\setminus U_{n+1}]\le\tfrac12\e(x).$$
\end{proof}

\begin{claim}\label{cl10} For every $n\in\w$ and $x\in U_n\setminus U_{n+1}$ we have 
$$\bar g(x)\in [0,1]^{m_{n+1}}\times\{0\}^{\w\setminus m_{n+1}}\subseteq [0,1]^\w\setminus K.$$
\end{claim}

\begin{proof} Given any $i\ge m_{n+1}$, find a unique $k\ge n+1$ such that $m_k\le i<m_{k+1}$ and conclude that $\xi_i(x)\in \xi_i[K\setminus U_{n+1}]\subseteq \xi_i[K\setminus U_k]\subseteq\{0\}$. This implies that $\bar g(x) \in [0,1]^{m_{n+1}}\times\{0\}^{\w\setminus m_{n+1}}\subseteq[0,1]^\w\setminus K$.
\end{proof}

Claim~\ref{cl10} implies that the space $$Z=\bar g[K\setminus F]=\bigcup_{n\in\w}\bar g[U_n\setminus U_{n+1}]=\bar g[K]\setminus \bar g[F]$$ is locally compact and locally finite-dimensional. Consider the map $g=\bar g{\restriction}K\setminus F:K\setminus F\to Z$ and observe that $g$ is proper.

\begin{claim} There exists a continuous map $\delta:Z\to(0,1]$ such that $\delta\circ g(x)\le\frac12 \e(x)$ for every $x\in K\setminus F$.
\end{claim}

\begin{proof} Let $\U$ be a locally finite cover of the locally compact space $Z$ by open subsets with compact closures. Since the map $g:K\setminus F\to Z$ is proper, for every $U\in\U$ the set $g^{-1}[\overline{U}]\subseteq K\setminus F$ is compact and hence the number $\e_U=\frac12{\cdot}\min \e\big[g^{-1}[\overline{U}]\big]$ is strictly positive. 

By the paracompactness of $Z$, there exists a family of continuous maps $(\lambda_U:Z\to[0,1])_{U\in\U}$ such that
\begin{itemize}
\item $\lambda^{-1}\big[(0,1]\big]\subseteq U$ for every $U\in\U$ and 
\item $\sum_{U\in\U}\lambda_U(x)=1$ for every $x\in K\setminus F$.
\end{itemize}
Then the function $$\delta:Z\to[0,1],\quad \delta:x\mapsto\sum_{U\in\U}\lambda_U(x)\e_U,$$has the required property.
\end{proof}

Finally, consider the continuous function $$h:Z\to X,\quad h:z\mapsto H(\bar f(z),\delta(z)),$$
and observe that for every $x\in K\setminus F$ we have
$$
\begin{aligned}
d(f(x),h\circ g(x))&=d(f(x),H(\bar f\circ \bar g(x),\delta\circ g(z))\le\\
&\le d\big(f(x),\bar f\circ \bar g(x)\big)+d\big(\bar f\circ g(x),H(\bar f\circ g(x),\delta\circ g(x))\big)\le\\
&\le\tfrac12\e(x)+\delta(x)\le \tfrac12\e(x)+\tfrac12\e(x)=\e(x).
\end{aligned}
$$

\end{proof}

Finally, we are able to finish the proof of Theorem~\ref{t:main}.

\begin{lemma} The pair $(M,X)$ is strongly $\vC$-preuniversal.
\end{lemma}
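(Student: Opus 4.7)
The plan is to use $B$ as the closed set $F$ in Lemmas~\ref{l11} and~\ref{l10} and then glue the resulting map on $K\setminus B$ to $f{\restriction}B$. Fix the input data: an open cover $\U$ of $M$, a pair $(K,C)\in\vC$, a closed subset $B\subseteq K$, and a continuous map $f:K\to M$ with $(f{\restriction}B)^{-1}[X]=B\cap C$. Using Lemma~\ref{l1} applied to the compact image $f[K]$, pick $\e_0>0$ so that every open $\e_0$-ball around a point of $f[K]$ is contained in some $U\in\U$. Fix a metric $d_K$ on the compact metrizable space $K$ and set
$$\e(x)=\tfrac12\min\{\e_0,\,d_K(x,B)\},$$
which defines a continuous function $\e:K\setminus B\to(0,1]$ that fades to $0$ as $x$ approaches $B$.

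Next, apply Lemma~\ref{l11} to the data $(K,B,f,\e)$: we obtain a locally compact, locally finite-dimensional space $Z$, a proper map $\pi:K\setminus B\to Z$, and a continuous map $\iota:Z\to X$ satisfying $d(f(x),\iota\circ\pi(x))\le\e(x)$ on $K\setminus B$. Feed this factorization into Lemma~\ref{l10} applied to $((K,C),B,\e,Z,\pi,\iota)$: the output is a continuous map $h:K\setminus B\to M$ with $h^{-1}[X]=C\setminus B$ and $d(h(x),\iota\circ\pi(x))<\e(x)$. The triangle inequality gives $d(h(x),f(x))<2\e(x)\le d_K(x,B)$ and also $d(h(x),f(x))<\e_0$ for all $x\in K\setminus B$.

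Finally, define $g:K\to M$ by $g=f$ on $B$ and $g=h$ on $K\setminus B$. Continuity at any $x\in B$ follows from $d(g(y),f(y))<d_K(y,B)\to 0$ as $y\to x$, combined with continuity of $f$. The preimage identity reads $g^{-1}[X]=(B\cap C)\cup(C\setminus B)=C$, by the hypothesis on $f{\restriction}B$ on the one side and the conclusion of Lemma~\ref{l10} on the other. For $\U$-nearness: on $B$ the maps agree; on $K\setminus B$ we have $d(g(x),f(x))<\e_0$, so by the choice of $\e_0$ via Lemma~\ref{l1}, the points $f(x)$ and $g(x)$ lie in a common member of $\U$.

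The heavy lifting --- the algebraic bookkeeping of the coideal condition, Ostrand's dimension reduction, and the proper factorization through a locally finite-dimensional space --- has already been absorbed into Lemmas~\ref{l:step}, \ref{l10}, and~\ref{l11}. The only thing that could fail in this assembly is continuity of $g$ at the gluing interface $B$, which is precisely why the boundary-fading bound $\e(x)\le d_K(x,B)$ is built into the control function from the start; no further obstacle is expected.
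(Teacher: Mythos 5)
Your proof is correct and follows essentially the same route as the paper's: choose $\e_0$ via Lemma~\ref{l1}, take a control function on $K\setminus B$ that vanishes as one approaches $B$, factor $f$ through a locally compact locally finite-dimensional $Z$ via Lemma~\ref{l11}, correct the preimage via Lemma~\ref{l10}, and glue the result to $f{\restriction}B$, checking continuity, the preimage identity and $\U$-nearness exactly as in the paper. The only cosmetic adjustment is to cap your control function at $1$ (e.g.\ take $\e(x)=\tfrac12\min\{\e_0,1,d_K(x,B)\}$), since Lemmas~\ref{l11} and \ref{l10} are stated for functions with values in $(0,1]$.
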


\begin{proof} Take any open cover $\U$ of $M$, pair $(K,C)\in\vC$, closed subset $B\subseteq K$ and map $f:K\to M$ such that $(f{\restriction}B)^{-1}[X]=B\cap C$. By Lemma~\ref{l1}, there exists $\e_0>0$ such that for any $x\in f[K]$ there exists $U\in\U$ such that $B(f(x),2\e_0)\subseteq U$. Let $\e:K\to [0,\e_0)$ be a continuous function such that $\e^{-1}(0)=B$. By Lemma~\ref{l11}, there exist a locally compact locally finite-dimensional space $Z$, a proper map $p:K\setminus B\to Z$ and a map $g:Z\to X$ such that $d(f(x),g\circ p(x))\le\e(x)$ for every $x\in K\setminus B$. 

By Lemma~\ref{l10}, there exists a continuous map $h:K\setminus B\to M$ such that $h^{-1}[X]=C\setminus B$ and $d(h(x),g\circ p(x))<\e(x)$ for every $x\in K\setminus B$. The triangle inequality implies that for every $x\in K\setminus B$ we have
$$d(h(x),f(x))\le d(h(x),g\circ p(x))+d(g\circ p(x),f(x))<2\e(x)\le 2\e_0,$$
which implies that the map $h$ is $\U$-near to $f{\restriction}K\setminus B$.

Consider the map $\bar f:K\to M$ such that $\bar f{\restriction}B=f{\restriction}B$ and $\bar f{\restriction}K\setminus B=h$. The choice of the function $\e$ with $\e^{-1}(0)=B$ and the continuity of the function $f$ imply the continuity of the function $\bar f$. Finally, observe that 
$$\bar f^{-1}[X]=(f{\restriction}B)^{-1}[X]\cup (f{\restriction}K\setminus B)^{-1}[X])=(B\cap C)\cup h^{-1}[X]=(B\cap C)\cup (C\setminus B)=C.$$
\end{proof}


\end{document}